\newtheorem{theorem}{Theorem}[section]
\newtheorem{proposition}[theorem]{Proposition}
\newtheorem{lemma}[theorem]{Lemma}
\newtheorem{corollary}[theorem]{Corollary}
\theoremstyle{definition}
\theoremstyle{remark}
\newcommand{\norm}[1]{\left\lVert#1\right\rVert}
\newcommand{\R}{\mathbb{R}}
\newcommand{\defeq}{\mathrel{\mathop:}=}
\newcommand{\eqdef}{\mathrel{\mathop=}:}
\newcommand{\uu}{\boldsymbol{u}}
\newcommand{\DD}{\boldsymbol{D}}
\newcommand{\vv}{\boldsymbol{v}}
\newcommand{\SSS}{\boldsymbol{S}}
\newcommand{\q}{\boldsymbol{q}}
\newcommand{\ww}{\boldsymbol{w}}
\newcommand{\dx}{\,\mathrm{d}x}
\newcommand{\dt}{\,\mathrm{d}t}
\newcommand{\aaa}{\boldsymbol{a}}
\numberwithin{equation}{section}
\begin{document}

\title{Existence of global weak solutions for unsteady motions of incompressible chemically reacting generalized Newtonian fluids}

\author{Seungchan Ko\thanks{Data and Information Technology Center, Samsung Electronics, 18488, Hwaseong-si, Republic of Korea. Email: \tt{ksm0385@gmail.com}} }

\date{~}
\maketitle
~\vspace{-1.5cm}

\begin{abstract}
We study a system of nonlinear partial differential equations describing the unsteady  motions of incompressible chemically reacting non-Newtonian fluids. The system under consideration consists of the generalized Navier--Stokes equations with a power-law type stress-strain relation, where the power-law index depends on the concentration of a chemical, coupled to a convection-diffusion equation for the concentration. This system of equations arises in the rheology of the synovial fluid found in the cavities of synovial joints. We prove the existence of global weak solutions of the non-stationary model by using the Galerkin method combined with generalized monotone operator theory and parabolic De Giorgi--Nash--Moser theory.
As the governing equations involve a nonlinearity with a variable power-law index, our proofs exploit the framework of generalized Sobolev spaces with variable integrability exponent.
\end{abstract}

\smallskip

\noindent{\textbf{Keywords:} Non-Newtonian fluid, variable power-law index, synovial fluid, monotone operator theory, De Giorgi--Nash--Moser theory}

\smallskip

\noindent{\textbf{AMS Classification:} 35D30, 35K61, 76A05}

\begin{section}{Introduction}
We are interested in developing an existence theory for a system of partial differential equations (PDEs), which consists of the unsteady incompressible generalized Navier--Stokes equations, wherein the viscosity is a polynomial function of the shear-rate with the power-law index depending on the concentration, and a convection-diffusion equation for the concentration. We would like to know whether the weak solutions to this system exist in the domain $Q_T\defeq\Omega\times(0,T)$, where $\Omega\subset\R^d$ with $d\geq2$ is a bounded open Lipschitz domain and $(0,T)$ denotes the time interval of interest. Namely, we study the following system of PDEs:
\begin{alignat}{2}
\text{div}\,{\uu}&=0\qquad &&\mbox{in $Q_T$},\label{eq1}\\
\partial_t\uu+\text{div}\,(\uu\otimes \uu)-\text{div}\,\SSS(c,\DD\uu)&=-\nabla \pi+\boldsymbol{f}\qquad &&\mbox{in $Q_T$},\label{eq2}\\
\partial_tc+\text{div}\,(c\uu)-\text{div}\,\q_c(c,\nabla c,\DD\uu)&=0\qquad &&\mbox{in $Q_T$},\label{eq3}
\end{alignat}
where $\uu:Q_T\rightarrow\R^d$, $\pi:Q_T\rightarrow\R$, $c:Q_T\rightarrow\R_{\geq0}$ are the velocity field, pressure and concentration respectively. In the present context, $\boldsymbol{f}:Q_T\rightarrow\R^d$ represents a given density of the bulk force, $\DD\uu$ denotes the symmetric velocity gradient, i.e., $\DD\uu=\frac{1}{2}(\nabla \uu+(\nabla \uu)^T)$, and $\SSS(c,\DD\uu)$ and $\q_c(c,\nabla c,\DD\uu)$ are the extra stress tensor of the Cauchy stress tensor and the diffusion flux respectively. In this setting, for given functions $\uu_0$ and $c_0$ defined in $\Omega$, we prescribe the following initial conditions
 \begin{equation}\label{IC}
 \uu(x,0)=\uu_0(x)\qquad{\rm{and}}\qquad c(x,0)=c_0(x)\qquad{\rm{in}}\,\,\Omega,
 \end{equation}
where $0\leq c_0\leq \tilde{c}_0$ for some positive constant $\tilde{c}_0$. 

Furthermore, we set $S_T\defeq \partial\Omega\times (0,T)$ and $\Gamma_T\defeq S_T\cup\{(x,t):x\in\overline{\Omega}, t=0\}$, and we prescribe the homogeneous boundary conditions
\begin{equation}
\uu(x,t)=\boldsymbol{0}\qquad{\rm{and}}\qquad c(x,t)=0\qquad{\rm{on}}\,\,S_T.
\end{equation}

Finally, as described above, we assume that the extra stress tensor $\SSS:\R_{\geq 0}\times\R^{d\times
d}_{\rm sym}\rightarrow\R^{d\times d}_{\rm sym}$ is a continuous mapping
with the following non-standard growth, strict monotonicity and coercivity
conditions: there exist positive constants $C_1$, $C_2$ and $C_3$ such that
\begin{equation}\label{S1}
|\SSS(c,\boldsymbol{B})|\leq C_1(|\boldsymbol{B}|^{p(c)-1}+1),
\end{equation}
\begin{equation}\label{S2}
(\SSS(c,\boldsymbol{B_1})-\SSS(c,\boldsymbol{B_2})):(\boldsymbol{B_1}-\boldsymbol{B_2})>0\,\,\,\text{for}\,\,\boldsymbol{B_1}\neq
\boldsymbol{B_2},
\end{equation}
\begin{equation}\label{S3}
\SSS(c,\boldsymbol{B})\cdot \boldsymbol{B}\geq C_2(|\boldsymbol{B}|^{p(c)}+|\SSS|^{p'(c)})-C_3,
\end{equation}
where $p:\R_{\geq0}\rightarrow\R_{\geq0}$ is a H\"older continuous function such that $1<p^-\leq p(c)\leq p^+<\infty$ and $p'(c)$ is defined as $\frac{p(c)}{p(c)-1}$.
Furthermore, we assume that the diffusion flux vector
$\q_c(c,\boldsymbol{g},\boldsymbol{B}):\R_{\geq 0}\times\R^d\times\R^{d\times d}_{\rm{sym}}\rightarrow\R^d$ is a
continuous mapping, which is linear with respect to $\boldsymbol{g}$, and satisfies the following inequalities: there exist positive constants $C_4$ and $C_5$ such that
\begin{align}
|\q_c(c,\boldsymbol{g},\boldsymbol{B})|&\leq C_4|\boldsymbol{g}|,\label{q1}\\
\q_c(c,\boldsymbol{g},\boldsymbol{B})\cdot \boldsymbol{g}&\geq C_5|\boldsymbol{g}|^2.\label{q2}
\end{align}

The prototypical examples satisfying all the assumptions \eqref{S1}--\eqref{q2} are of the following forms:
\begin{equation}\label{proto}
\SSS(c,\DD\uu)=\nu(c,|\DD\uu|)\DD\uu,\qquad \q_c(c,\nabla c,\DD\uu)=\boldsymbol{K}(c,|\DD\uu|)\nabla c,
\end{equation}
where the viscosity $\nu(c,|\DD\uu|)$ depends on the shear-rate and on the concentration in the following fashion: 
\begin{equation}\label{viscosityform}
\nu(c,|\DD\uu|)\sim\nu_0(1+|\DD\uu|^2)^{\frac{p(c)-2}{2}},
\end{equation}
where $\nu_0$ is a positive constant. 

This model is a generalization of the classical power-law model, and the mathematical study of this type of fluid flow model is closely related to our problem. The rigorous mathematical study of power-law fluids began in the late nineteen sixties with the pioneering works of Lions and Ladyzhenskaya in \cite{Lions} and \cite{lady2}, \cite{lady1}, respectively. In these papers, the authors developed the existence theory for weak solutions of non-stationary models for $p\geq\frac{3d+2}{d+2}$ with the aid of monotone operator theory. Since then, there has been significant progress in the mathematical theory of non-Newtonian fluids. To obtain the compactness of the convective term, some useful approximation techniques were used such as $L^{\infty}$ and Lipschitz truncations, and thus the existence of weak solutions with a wide range of the (constant) power-law index $p>\frac{2d}{d+2}$ was established; see \cite{FMS1997, R1997, FMS2003, DMS2008} in the case of steady, and \cite{MNR2001, W2007, DRW2010, BDS2013} for unsteady flows.

Interestingly, it was discovered through laboratory experiments that in a number of non-Newtonian fluid flow models, the power-law index $p$ is not a fixed constant but is, rather, variable; see, e.g., \cite{RRS1996} for a model where the power-law index is of the form $p(\cdot)\defeq p(|\boldsymbol{E}(x)|^2)\sim p(x)$, $\boldsymbol{E}$ being a given electric field. Such electro-rheological models were investigated in \cite{R2000, R2004}, where the existence theory was developed. Recently, the Lipschitz truncation method was generalized to variable-exponent spaces in \cite{DMS2008}, where the existence of weak solutions was shown provided that $p(\cdot)$ is sufficiently regular in some sense, $1<p^-\leq p(x)\leq p^+<\infty$ and $p^->\frac{2d}{d+2}$.

The rigorous mathematical study of a PDE system, consisting of the Navier--Stokes equations of power-law type, with a concentration-dependent viscosity, coupled to a convection-diffusion equation, was initiated in \cite{BMR2008}. In that paper, however, the power-law index was a fixed number and the concentration was merely a scaling factor of the viscosity, i.e., $\nu(c,|\boldsymbol{D}|^2)\sim f(c)\tilde{\nu}(|\boldsymbol{D}|^2)$. However, for some biological fluids including the synovial fluid, it was shown that the concentration of a particular molecule is not just a scaling factor of the viscosity, but it influences the rate at which the fluid thins the shear. For instance, if the concentration is zero, the viscosity remains constant for different shear-rates. Additionally, for increasing concentration in the solution, the fluid exhibits higher apparent viscosity, and it thins the shear more remarkably. Therefore, a new model describing these fluids was proposed where the shear-thinning index itself is a function of the concentration. Such a model is able to describe the viscous properties of these biological fluids in a better way than the model used in \cite{BMR2008}. For the rheological background of this model, see \cite{exp, exp2}.

In \cite{BP2013, BP2014}, the authors started studying this new model. The existence of a weak solution to the elliptic problem was proved in the case when the variable exponent $p(x)$ is bounded below by $\frac{3d}{d+2}$ and $\frac{d}{2}$ respectively. From the viewpoint of computational mathematics, in \cite{KPS2017}, a construction of a conforming finite element approximation of the stationary model was considered and the convergence analysis of the numerical method was developed in the case of variable-exponent spaces in a two dimensional domain. Subsequently, the analysis was extended to three space dimensions in \cite{KS2017} by using a slightly different numerical scheme. 

To the best of our knowledge, there is no existence result for this new model in the case of unsteady fluid flows. In this paper, we shall develop the existence theory for the non-stationary model with the help of a generalized Galerkin method, motivated by \cite{BMR2009}, together with the use of monotone operator theory in variable-exponent spaces and the parabolic De Giorgi--Nash--Moser regularity theory. As a consequence, we can also obtain the existence result for a class of unsteady electro-rheological fluid flow models as a special case, which is completely new.

\end{section}

\begin{section}{Main result}
In this section, we introduce our main theorem together with some preliminaries required for the precise statement of the theorem. We shall write $z=(x,t)\in\R^{d+1}$, where $x\in\R^d$ denotes the spatial variable and $t\in\R_{\geq0}$ is the time variable. Let $\mathcal{P}$ be a set of all measurable functions $p:Q_T\rightarrow[1,\infty]$; we call a function $p\in\mathcal{P}(Q_T)$ a variable exponent. Then we define $p^-\defeq$ ess $\inf_{z\in Q_T}p(z)$, $p^+\defeq$ ess $\sup_{z\in Q_T}p(z)$. Throughout the paper, we only deal with the case
\begin{equation}\label{pcondition}
1<p^-\leq p^+<\infty.
\end{equation}

Since we are considering the concentration-dependent power-law index, we need to use generalized Lebesgue and Sobolev spaces with variable exponents. Precisely, we first introduce generalized Lebesgue spaces, which are equipped with the corresponding Luxembourg norms
\begin{align*}
L^{p(\cdot)}(Q_T)&\defeq \left\{u\in L^1_{\rm{loc}}(Q_T):\int_{Q_T}|u(x,t)|^{p(x,t)}\dx\dt<\infty\right\},\\
\norm{u}_{L^{p(\cdot)}(Q_T)}&\defeq \inf\left\{\lambda>0:\int_{Q_T}\bigg|\frac{u(x,t)}{\lambda}\bigg|^{p(x,t)}\dx\dt\leq1\right\}.
\end{align*}
Some basic inequalities, which are well-known for classical Lebesgue spaces, also hold for variable-exponent Lebesgue spaces. For example, if $p$, $q$, $s\in\mathcal{P}(Q_T)$ are such that $\frac{1}{s(z)}=\frac{1}{p(z)}+\frac{1}{q(z)}$ for all $z\in Q_T$, then for any $f\in L^{p(\cdot)}(Q_T)$ and $g\in L^{q(\cdot)}(Q_T)$,  we have
\begin{itemize}
\item (H\"older's inequality) $\qquad\|fg\|_{L^{s(\cdot)}(Q_T)}\leq2\|f\|_{L^{p(\cdot)}(Q_T)}\|g\|_{L^{q(\cdot)}(Q_T)}$,
\item (Young's inequality) $\qquad\int_{Q_T}|fg|^{s(x,t)}\dx\dt\leq\int_{Q_T}|f|^{p(x,t)}\dx\dt+\int_{Q_T}|g|^{q(x,t)}\dx\dt,$
\end{itemize}
which are not difficult to verify.

Next we define the following anisotropic Sobolev space with variable exponent:
\begin{align*}
W_{p(\cdot)}(Q_T)&\defeq \left\{u\in L^1(0,T;W^{1,1}_{0,\,{\rm{div}}}(\Omega)^d):u\in L^2(Q_T)^d\,\,\,{\rm{and}}\,\,\,|\nabla u|\in L^{p(\cdot)}(Q_T)\right\},\\
\norm{u}_{W_{p(\cdot)}(Q_T)}&\defeq \|u\|_{L^2(Q_T)}+\|\nabla u\|_{L^{p(\cdot)}(Q_T)}.
\end{align*}

It is easy to check that all of the above spaces are Banach spaces, and thanks to \eqref{pcondition}, they are all separable and reflexive; see \cite{DHHR2011}. We also define the dual space $(L^{p(\cdot)}(Q_T))^*=L^{p'(\cdot)}(Q_T)$ where the dual variable exponent $p'\in\mathcal{P}(Q_T)$ is defined by $\frac{1}{p(z)}+\frac{1}{p'(z)}=1.$

Furthermore, in order to study the temporal regularity of functions, we introduce the following parabolic spaces:
\begin{align*}
C([0,T];L^2(\Omega))&\defeq\{u\in L^{\infty}(0,T;L^2(\Omega)):u:[0,T]\rightarrow L^2(\Omega)\,\,\,{\rm{is}}\,\,\,{\rm{continuous}}\},\\
C_{\rm{w}}([0,T];L^2(\Omega))&\defeq\{u\in L^{\infty}(0,T;L^2(\Omega)):u:[0,T]\rightarrow L^2(\Omega)\,\,\,{\rm{is}}\,\,\,{\rm{weakly}}\,\,\,{\rm{continuous}}\},
\end{align*}
where both spaces are equipped with the $L^{\infty}(0,T;L^2(\Omega))$ norm.

Finally, we need to introduce the parabolic H\"older space, which has an important role in our analysis. We consider the parabolic metric $d_p$ defined by
\[d_p(z_1,z_2)\defeq |x_1-x_2|+|t_1-t_2|^{\frac{1}{2}},\]
where $z_1=(x_1,t_1)$, $z_2=(x_2,t_2)\in\R^{d+1}$. We then define the parabolic H\"older space using the above parabolic distance: for some $\alpha\in(0,1)$, 
\[C^{\alpha,\alpha/2}(Q_T)\defeq\{f\in C(Q_T):\exists C>0\,\,\,{\rm{such}}\,\,\,{\rm{that}}\,\,\,|f(z_1)-f(z_2)|\leq Cd_p(z_1,z_2)^{\alpha}\,\,\,{\rm{for}}\,\,\,{\rm{any}}\,\,\,z_1,z_2\in Q_T\},\]
with the norm
\[\|f\|_{C^{\alpha,\alpha/2}(Q_T)}\defeq\|f\|_{L^{\infty}(Q_T)}+\sup_{z_1,z_2\in Q_T,\,z_1\neq z_2,}\frac{|f(z_1)-f(z_2)|}{d_p(z_1,z_2)^{\alpha}}.\]

Then, our main result concerning the existence of weak solutions is as follows:
\begin{theorem}\label{mainthm}
Let $\Omega\subset\R^d$ with $d\geq2$ be a bounded open Lipschitz domain, and assume that $\boldsymbol{f}\in L^2(Q_T)^d$, $\uu_0\in L^2(\Omega)^d$ and $c_0\in \{f\in C^{\alpha_0}(\overline{\Omega})\,\,\,{\rm{for}}\,\,\,{\rm{some}}\,\,\,\alpha_0\in(0,1):f=0\,\,\,{\rm{on}}\,\,\,\partial\Omega\}$.  If $p$ is a H\"older continuous function with $p^->\frac{d+2}{2}$, then there exists a weak solution pair $(\uu,c)$ to the system of equations \eqref{eq1}--\eqref{eq3} and some $\alpha\in(0,1)$ such that
\begin{align*}
\uu&\in C([0,T];L^2(\Omega)^d)\cap L^{p^-}(0,T;W^{1,p^-}_{0,\,{\rm{div}}}(\Omega)^d)\cap W_{p(c)}(Q_T),\\
c&\in C([0,T];L^2(\Omega))\cap L^{2}(0,T;W^{1,2}_0(\Omega))\cap C^{\alpha,\alpha/2}(\overline{Q}_T),
\end{align*}
satisfying
\[\int_{Q_T}\SSS(c,\DD\uu):\boldsymbol{D\psi}\dx\dt
=\int_{Q_T}\left(\boldsymbol{f}\cdot\boldsymbol{\psi}+(\uu\otimes\uu):\boldsymbol{D\psi}+\uu\cdot\partial_t\boldsymbol{\psi}\right)\dx\dt+\int_{\Omega}\uu_0(x)\cdot\boldsymbol{\psi}(x,0)\dx\]
for all $\boldsymbol{\psi}\in C^{\infty}_{0,{\rm{div}}}(\Omega\times [0,T))^d$ and
\[\int_{Q_T}\q_c(c,\nabla c,\DD\uu)\cdot\nabla\varphi\dx\dt=\int_{Q_T}\left(c\partial_t\varphi+c\uu\cdot\nabla\varphi\right)\dx\dt+\int_{\Omega}c_0(x)\varphi(x,0)\dx\]
for all $\varphi\in C^{\infty}_0(\Omega\times [0,T))$.
\end{theorem}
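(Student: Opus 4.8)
The plan is to pass to the limit in a Galerkin-type approximation, treating $\SSS$ by generalized monotone operator theory in variable-exponent spaces and treating $c$ by parabolic De Giorgi--Nash--Moser theory, in the spirit of \cite{BMR2009}. Fix an $L^2(\Omega)$-orthonormal Galerkin basis $\{\boldsymbol\omega_r\}_{r\ge1}$ of smooth divergence-free functions whose span is dense in $W^{1,2}_{0,\mathrm{div}}(\Omega)^d$. Given $\uu^N(t)=\sum_{r=1}^N a^N_r(t)\boldsymbol\omega_r$, equation \eqref{eq3} becomes, after using $\operatorname{div}\uu^N=0$, the linear parabolic problem $\partial_t c^N+\uu^N\cdot\nabla c^N-\operatorname{div}(\boldsymbol A(c^N,\DD\uu^N)\nabla c^N)=0$ with $c^N(\cdot,0)=c_0$, where $\q_c(c,\boldsymbol g,\boldsymbol B)=\boldsymbol A(c,\boldsymbol B)\boldsymbol g$; by \eqref{q1}--\eqref{q2} and the assumed linearity in $\boldsymbol g$ the matrix $\boldsymbol A$ is bounded and uniformly elliptic, so this problem has a unique weak solution $c^N\in C([0,T];L^2(\Omega))\cap L^2(0,T;W^{1,2}_0(\Omega))$, and testing with $(c^N)_-$ and with $(c^N-\tilde c_0)_+$ (the convective contribution vanishing by incompressibility) yields $0\le c^N\le\tilde c_0$. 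Inserting this $c^N$ into $\SSS$ in the projected version of \eqref{eq2} gives a system of ordinary differential equations for $(a^N_r)_{r=1}^N$; a Schauder fixed-point argument closes the coupling at level $N$, and the a priori bounds below make the approximate solutions global on $[0,T]$.

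Next I establish $N$-uniform estimates. Testing \eqref{eq2} with $\uu^N$ the convective term vanishes, and \eqref{S3} together with Korn's and Young's inequalities and the elementary bound $|\DD\uu^N|^{p^-}\le|\DD\uu^N|^{p(c^N)}+1$ gives $\uu^N$ bounded in $L^\infty(0,T;L^2(\Omega)^d)\cap L^{p^-}(0,T;W^{1,p^-}_{0,\mathrm{div}}(\Omega)^d)$ and $\SSS(c^N,\DD\uu^N)$ bounded in $L^{s_0}(Q_T)$ for some $s_0>1$; testing \eqref{eq3} with $c^N$ and using \eqref{q2} gives $c^N$ bounded in $L^\infty(0,T;L^2(\Omega))\cap L^2(0,T;W^{1,2}_0(\Omega))$. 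The hypothesis $p^->d$ now enters decisively: $W^{1,p^-}(\Omega)\hookrightarrow L^\infty(\Omega)$, so $\uu^N$ is bounded in $L^{p^-}(0,T;L^\infty(\Omega)^d)$; since moreover $p^->2$, the drift in the equation for $c^N$ lies, with a uniform bound, in a class for which parabolic De Giorgi--Nash--Moser theory applies, and since $c_0\in C^{\alpha_0}(\overline\Omega)$ vanishes on $\partial\Omega$ the parabolic boundary datum is Hölder and compatible, so one obtains a uniform bound for $c^N$ in $C^{\alpha,\alpha/2}(\overline Q_T)$ for some $\alpha\in(0,1)$. Finally, reading $\partial_t\uu^N$ off the Galerkin equations bounds it uniformly in $L^{\sigma}(0,T;(W^{k,2}_{0,\mathrm{div}}(\Omega)^d)^*)$ for a suitable $\sigma>1$ and a large integer $k$.

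Along a subsequence we then have $\uu^N\rightharpoonup\uu$ in $L^{p^-}(0,T;W^{1,p^-}_{0,\mathrm{div}})$, weakly-$*$ in $L^\infty(0,T;L^2)$, and, by Aubin--Lions--Simon using the compact embedding of $C^{0,\beta}(\overline\Omega)$ into $C(\overline\Omega)$, strongly in $L^{p^-}(0,T;C(\overline\Omega)^d)$, whence $\uu^N\otimes\uu^N\to\uu\otimes\uu$ in $L^1(Q_T)$ and $c^N\uu^N\to c\uu$ in $L^2(Q_T)$; by Arzelà--Ascoli $c^N\to c$ uniformly on $\overline Q_T$, $\nabla c^N\rightharpoonup\nabla c$ in $L^2(Q_T)$, and, $p$ being Hölder continuous, $p(c^N)\to p(c)$ uniformly, so $p(c)$ is Hölder continuous in $z$ and the variable-exponent calculus of \cite{DHHR2011} applies, with weak lower semicontinuity of the modular giving $|\nabla\uu|\in L^{p(c)}(Q_T)$. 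Writing $\SSS(c^N,\DD\uu^N)\rightharpoonup\overline\SSS$ in $L^{s_0}(Q_T)$, one identifies $\overline\SSS=\SSS(c,\DD\uu)$ by the Minty device: the limiting momentum identity (valid for smooth test functions, then for $\uu$ after time-mollification using $\uu\in C_{\mathrm w}([0,T];L^2)$) yields $\tfrac12\|\uu(T)\|_{L^2}^2+\int_{Q_T}\overline\SSS:\DD\uu=\tfrac12\|\uu_0\|_{L^2}^2+\int_{Q_T}\boldsymbol f\cdot\uu$, which with the Galerkin energy identity and weak lower semicontinuity gives $\limsup_N\int_{Q_T}\SSS(c^N,\DD\uu^N):\DD\uu^N\le\int_{Q_T}\overline\SSS:\DD\uu$; feeding this into the monotonicity inequality \eqref{S2} and passing to the limit in $\int_{Q_T}(\SSS(c^N,\DD\uu^N)-\SSS(c^N,\boldsymbol B)):(\DD\uu^N-\boldsymbol B)\ge0$, using $c^N\to c$ uniformly, the uniform convergence of the exponents, and \eqref{S1} with the continuity of $\SSS$, gives $\int_{Q_T}(\overline\SSS-\SSS(c,\boldsymbol B)):(\DD\uu-\boldsymbol B)\ge0$ for all admissible test tensors $\boldsymbol B$, and a standard Minty/hemicontinuity argument forces $\overline\SSS=\SSS(c,\DD\uu)$. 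The same computation shows $\int_{Q_T}(\SSS(c^N,\DD\uu^N)-\SSS(c^N,\DD\uu)):(\DD\uu^N-\DD\uu)\to0$, so by the strict monotonicity \eqref{S2} a further subsequence satisfies $\DD\uu^N\to\DD\uu$ a.e. in $Q_T$; hence $\boldsymbol A(c^N,\DD\uu^N)\to\boldsymbol A(c,\DD\uu)$ a.e., and, this sequence being bounded, $\q_c(c^N,\nabla c^N,\DD\uu^N)\rightharpoonup\q_c(c,\nabla c,\DD\uu)$ in $L^2(Q_T)^d$. Passing to the limit in the two weak formulations, attaining the initial datum for $\uu$ through $\boldsymbol\psi(\cdot,0)$ via $\uu\in C_{\mathrm w}([0,T];L^2)$ and that for $c$ by continuity up to $t=0$ of $c\in C^{\alpha,\alpha/2}(\overline Q_T)$, and upgrading to $\uu\in C([0,T];L^2)$ from the resulting energy equality, completes the argument.

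The main obstacle I anticipate is the identification $\overline\SSS=\SSS(c,\DD\uu)$: the monotonicity argument has to be run with an integrability exponent $p(c)$ that depends on the limit concentration, which at the outset is controlled only weakly, so one must simultaneously exploit the uniform convergence $p(c^N)\to p(c)$ furnished by the De Giorgi--Nash--Moser bound and justify using $\uu$ as a test function in the anisotropic variable-exponent space $W_{p(c)}(Q_T)$ (needing a suitable time-mollification and the integration-by-parts formula there). It is precisely the hypothesis $p^->d$ that makes this route viable without the Lipschitz truncation technique, since it renders the convective term compact directly through $W^{1,p^-}\hookrightarrow C^{0,\beta}$ and Aubin--Lions, and at the same time supplies the drift regularity needed for the parabolic regularity theory for $c$.
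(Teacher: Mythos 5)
Your strategy is essentially the paper's: a Galerkin scheme with energy estimates, the De Giorgi--Nash--Moser H\"older bound for the concentration driven by $p^->d$, Aubin--Lions compactness for the convective term, Minty's trick in the variable-exponent setting (including the key point you flag, namely the density of smooth functions in $W_{p(c)}(Q_T)$ needed to test the limit equation with $\uu$ itself), and almost-everywhere convergence of $\DD\uu^N$ to identify the diffusive flux. The only deviations are minor: the paper uses a two-index Galerkin discretization in both $\uu$ and $c$ and passes $M\to\infty$ first, rather than your Schauder fixed point --- note that your concentration subproblem is quasilinear, not linear, since the diffusion matrix depends on $c^N$, so the uniqueness you invoke to define a single-valued fixed-point map is not automatic and the paper's route sidesteps it --- and it verifies the De Giorgi--Nash--Moser drift condition via the parabolic embedding into $L^{p^-(d+2)/d}(Q_T)$ rather than Morrey's $W^{1,p^-}\hookrightarrow L^\infty$.
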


\end{section}

\begin{section}{Auxiliary tools and results}
In this section, we introduce the necessary technical tools, especially from function space theory, that we use throughout the paper. We begin with some embedding theorems. The following proposition concerns the compact embedding of parabolic H\"older spaces.
\begin{proposition}\label{Holdercpt}
Let $Q_T\subset\R^{d+1}$ be such that $\overline{Q}_T$ is compact in $\R^{d+1}$ and let $0<\alpha<\beta<1$. Then the inclusion map $\mathit{i}:C^{\beta,\beta/2}(\overline{Q}_T)\hookrightarrow C^{\alpha,\alpha/2}(\overline{Q}_T)$ is compact.
\end{proposition}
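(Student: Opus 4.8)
The plan is to run an Arzel\`a--Ascoli argument followed by an interpolation estimate for H\"older seminorms. Let $\{f_n\}$ be a bounded sequence in $C^{\beta,\beta/2}(\overline{Q}_T)$, say $\|f_n\|_{C^{\beta,\beta/2}(\overline{Q}_T)}\le M$ for all $n$; it suffices to extract a subsequence that converges in $C^{\alpha,\alpha/2}(\overline{Q}_T)$. In particular the $f_n$ are uniformly bounded, and a uniform $\beta$-H\"older bound with respect to the parabolic metric $d_p$ forces equicontinuity (note that $d_p$ is bi-H\"older equivalent to the Euclidean metric on the compact set $\overline{Q}_T$, hence induces the usual topology and the usual notion of equicontinuity). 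Since $\overline{Q}_T$ is a compact metric space, Arzel\`a--Ascoli yields a subsequence, not relabelled, converging uniformly on $\overline{Q}_T$ to some $f\in C(\overline{Q}_T)$.

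Next I would check that $f\in C^{\beta,\beta/2}(\overline{Q}_T)$: fixing $z_1\neq z_2$ and passing to the limit in $|f_n(z_1)-f_n(z_2)|\le M\,d_p(z_1,z_2)^{\beta}$ gives the same bound for $f$, so the $C^{\beta,\beta/2}$ H\"older seminorm of $f$ is at most $M$ (lower semicontinuity of the seminorm under uniform convergence). Consequently $g_n\defeq f_n-f$ is bounded in $C^{\beta,\beta/2}(\overline{Q}_T)$ by $2M$, while $\|g_n\|_{L^{\infty}(\overline{Q}_T)}\to0$. The heart of the matter is then the interpolation bound: for any $z_1\neq z_2$,
\[
\frac{|g_n(z_1)-g_n(z_2)|}{d_p(z_1,z_2)^{\alpha}}
=\left(\frac{|g_n(z_1)-g_n(z_2)|}{d_p(z_1,z_2)^{\beta}}\right)^{\alpha/\beta}
\,|g_n(z_1)-g_n(z_2)|^{1-\alpha/\beta}
\le (2M)^{\alpha/\beta}\bigl(2\|g_n\|_{L^{\infty}(\overline{Q}_T)}\bigr)^{1-\alpha/\beta},
\]
so the $C^{\alpha,\alpha/2}$ H\"older seminorm of $g_n$ is bounded by $C\,\|g_n\|_{L^{\infty}(\overline{Q}_T)}^{1-\alpha/\beta}\to0$. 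Combined with $\|g_n\|_{L^{\infty}(\overline{Q}_T)}\to0$, this gives $f_n\to f$ in $C^{\alpha,\alpha/2}(\overline{Q}_T)$, which proves compactness of the inclusion.

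I do not expect a genuine obstacle here; the only points that need a moment's care are the lower semicontinuity of the H\"older seminorm under uniform convergence (immediate, as above) and the verification that the parabolic metric does not disrupt Arzel\`a--Ascoli (it does not, since on the compact set $\overline{Q}_T$ it is comparable, after raising to suitable powers, to the Euclidean metric). One could alternatively avoid the interpolation identity by splitting into the regimes $d_p(z_1,z_2)\le\delta$ and $d_p(z_1,z_2)>\delta$ and optimizing in $\delta$, but the displayed one-line interpolation is cleaner and yields the same conclusion.
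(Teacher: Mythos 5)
Your proof is correct and follows essentially the same strategy as the paper: apply Arzel\`a--Ascoli to extract a uniformly convergent subsequence and then show that the $\alpha$-H\"older seminorm of $g_n=f_n-f$ tends to zero. The only difference is in the final estimate --- the paper uses exactly the $\delta$-splitting you mention as an alternative, whereas your multiplicative interpolation $[g_n]_{\alpha}\le [g_n]_{\beta}^{\alpha/\beta}\,(2\|g_n\|_{L^\infty})^{1-\alpha/\beta}$ is a touch cleaner and even gives an explicit rate of convergence.
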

\begin{proof}
We first note that $\overline{Q}_T$ is also compact with respect to the topology induced by the parabolic distance $d_p(\cdot,\cdot)$. Let $\{f_n\}^{\infty}_{n=1}$ be a bounded sequence in $C^{\beta,\beta/2}(\overline{Q}_T)$. Then, trivially $\{f_n\}^{\infty}_{n=1}$ is a uniformly bounded and equicontinuous class of functions. By the Arzel\'a--Ascoli theorem, there exists a subsequence $\{f_n\}^{\infty}_{n=1}$ (not relabelled) such that $f_n\rightarrow f$ in $C(\overline{Q}_T)$ for some $f\in C^{\beta,\beta/2}(\overline{Q}_T)$. 

Let $g_n\defeq f-f_n\in C^{\beta,\beta/2}(\overline{Q}_T)$. Then $\|g_n\|_{C^{\beta,\beta/2}(\overline{Q}_T)}\leq C$ for some constant $C>0$ and $\|g\|_{C(\overline{Q}_T)}\rightarrow0$ as $n\rightarrow\infty$. To complete the proof, it suffices to show that 
\[[g_n]_{\alpha}\defeq\sup_{z_1,\,z_2\in\overline{Q}_T,\,z_1\neq z_2}\frac{|g_n(z_1)-g_n(z_2)|}{d_p(z_1,z_2)^{\alpha}}\rightarrow0.\]
For arbitrarily small $\delta>0$,
\begin{align*}
[g_n]_{\alpha}
&\leq\sup_{z_1\neq z_2,\,d_p(z_1,z_2)\leq\delta}\frac{|g_n(z_1)-g_n(z_2)|}{d_p(z_1,z_2)^{\alpha}}+\sup_{d_p(z_1,z_2)>\delta}\frac{|g_n(z_1)-g_n(z_2)|}{d_p(z_1,z_2)^{\alpha}}\\
&\leq\sup_{z_1\neq z_2,\,d_p(z_1,z_2)\leq\delta}\frac{|g_n(z_1)-g_n(z_2)|}{d_p(z_1,z_2)^{\beta}}d_p(z_1,z_2)^{\beta-\alpha}+\sup_{d_p(z_1,z_2)>\delta}\frac{|g_n(z_1)-g_n(z_2)|}{d_p(z_1,z_2)^{\alpha}}\\
&\leq\delta^{\beta-\alpha}[g_n]_{\beta}+2\delta^{-\alpha}\|g_n\|_{C(\overline{Q}_T)}\\
&\leq C\delta^{\beta-\alpha}+2\delta^{-\alpha}\|g_n\|_{C(\overline{Q}_T)}.
\end{align*}
Therefore, 
\[\limsup_{n\rightarrow\infty}\,[g_n]_{\alpha}\leq C\delta^{\beta-\alpha}+2\delta^{-\alpha}\limsup_{n\rightarrow\infty}\|g_n\|_{C(\overline{Q}_T)}\leq C\delta^{\beta-\alpha}.\]
Since $\delta>0$ is arbitrary, we conclude that $[g_n]_{\alpha}\rightarrow0$ as $n\rightarrow\infty$, which completes the proof.
\end{proof}

Also, we will use the following parabolic version of the Sobolev embedding theorem, which comes from the use of standard Sobolev embedding and an interpolation inequality.
\begin{proposition}\label{mainemb}
There exists a constant $C>0$, depending only on $d$ and $p$, such that
\begin{equation}\label{paraemb}
L^p(0,T;W^{1,p}_0(\Omega))\cap L^{\infty}(0,T;L^2(\Omega))\hookrightarrow L^{\frac{p(d+2)}{d}}(Q_T),
\end{equation}
with
\[\int_{Q_T}|v|^{\frac{p(d+2)}{d}}\dx\dt\leq C\left(\sup_{0\in(0,T)}\int_{\Omega}|v|^2\dx\right)^{\frac{p}{d}}\int_{Q_T}|\nabla v|^p\dx\dt\]
for every $v\in L^p(0,T;W^{1,p}_0(\Omega))\cap L^{\infty}(0,T;L^2(\Omega))$.
\end{proposition}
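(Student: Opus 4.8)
The plan is to reduce the space-time estimate to a pointwise-in-time application of the Gagliardo--Nirenberg interpolation inequality in the spatial variable, and then integrate the resulting bound over $(0,T)$. Set $q\defeq\frac{p(d+2)}{d}$ and $\theta\defeq\frac{d}{d+2}$; a direct check shows $\theta\in(0,1)$ and that the exponents satisfy the Sobolev scaling relation
\[\frac1q=\theta\paren{\frac1p-\frac1d}+(1-\theta)\frac12.\]
For almost every $t\in(0,T)$ we have $v(\cdot,t)\in W^{1,p}_0(\Omega)$, so extending it by zero to all of $\R^d$ (which is why no regularity of $\Omega$ is needed) and applying the Gagliardo--Nirenberg inequality on $\R^d$ gives a constant $C=C(d,p)$ with
\[\norm{v(\cdot,t)}_{L^{q}(\Omega)}\leq C\,\norm{\nabla v(\cdot,t)}_{L^{p}(\Omega)}^{\theta}\,\norm{v(\cdot,t)}_{L^{2}(\Omega)}^{1-\theta}\qquad\text{for a.e. }t\in(0,T).\]

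Next I would raise this inequality to the power $q$ and exploit the two algebraic identities $\theta q=p$ and $(1-\theta)q=\frac{2p}{d}$, together with the bound $\norm{v(\cdot,t)}_{L^2(\Omega)}\leq\sup_{s\in(0,T)}\norm{v(\cdot,s)}_{L^2(\Omega)}$ (with $\sup$ understood as the essential supremum), to obtain
\[\norm{v(\cdot,t)}_{L^{q}(\Omega)}^{q}\leq C\paren{\sup_{s\in(0,T)}\int_{\Omega}\abs{v(\cdot,s)}^{2}\dx}^{\frac{p}{d}}\norm{\nabla v(\cdot,t)}_{L^{p}(\Omega)}^{p}.\]
Integrating over $t\in(0,T)$ then yields precisely
\[\int_{Q_T}\abs{v}^{\frac{p(d+2)}{d}}\dx\dt\leq C\paren{\sup_{t\in(0,T)}\int_{\Omega}\abs{v}^{2}\dx}^{\frac{p}{d}}\int_{Q_T}\abs{\nabla v}^{p}\dx\dt,\]
and since the right-hand side is finite whenever $v\in L^{p}(0,T;W^{1,p}_0(\Omega))\cap L^{\infty}(0,T;L^2(\Omega))$, this simultaneously establishes the continuity of the claimed embedding.

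If one prefers not to invoke Gagliardo--Nirenberg as a black box, the inequality in the first step can be derived from the Sobolev embedding together with elementary Lebesgue interpolation: for $p<d$ one has $W^{1,p}_0(\Omega)\hookrightarrow L^{p^{*}}(\Omega)$ with $p^{*}=\frac{pd}{d-p}$, and $q$ always lies between $2$ and $p^{*}$, so $\norm{w}_{L^{q}(\Omega)}\leq\norm{w}_{L^{p^{*}}(\Omega)}^{\theta}\norm{w}_{L^{2}(\Omega)}^{1-\theta}$ with the very same $\theta$; for $p\geq d$ one replaces $p^{*}$ by an arbitrary large finite exponent (all finite exponents if $p=d$, any exponent if $p>d$) and argues identically. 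The remaining points are routine: measurability of $t\mapsto\norm{v(\cdot,t)}_{L^{q}(\Omega)}$ and $t\mapsto\norm{\nabla v(\cdot,t)}_{L^{p}(\Omega)}$ (built into the definitions of the spaces through Bochner measurability), and the identification of the essential supremum in time with the $\sup$ written in the statement. I do not expect a serious obstacle anywhere; the only place to be careful is the bookkeeping of the interpolation exponents, but these work out uniformly for every $p\in(1,\infty)$, and in the regime $p^{-}>d$ relevant to Theorem~\ref{mainthm} one is always in the simplest case $p>d$.
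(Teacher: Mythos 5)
Your proof is correct, and it is exactly the argument the paper has in mind: the paper states this proposition without proof, remarking only that it "comes from the use of standard Sobolev embedding and an interpolation inequality," which is precisely your slicewise Gagliardo--Nirenberg (or Sobolev plus Lebesgue interpolation) bound with $\theta=\tfrac{d}{d+2}$, raised to the power $q=\tfrac{p(d+2)}{d}$ and integrated in time. The exponent bookkeeping ($\theta q=p$, $(1-\theta)q=\tfrac{2p}{d}$) checks out.
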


Additionally, we introduce another important embedding theorem, which is classical in the study of time-dependent problems. See, for example, \cite{S1987} or Lemma 6.3 in \cite{F2004}.
\begin{lemma}\label{aubinlion}
{\rm{(Aubin--Lions)}} Let $V_1$, $V_2$ and $V_3$ be reflexive, separable Banach spaces such that
\[V_1\hookrightarrow\hookrightarrow V_2\qquad{\rm{and}}\qquad V_2\hookrightarrow V_3,\]
and let $1<p<\infty$, $1\leq q\leq+\infty$. Then $\{v\in L^p(0,T;V_1) : \partial_tv\in L^q(0,T;V_3)\}$ is compactly embedded into $L^p(0,T;V_2)$.
\end{lemma}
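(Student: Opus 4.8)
The plan is to combine an Ehrling-type interpolation inequality with a Fréchet--Kolmogorov compactness argument in the time variable; since this is the classical Aubin--Lions--Simon lemma, what follows is the route I would reconstruct. Set $W\defeq\{v\in L^p(0,T;V_1):\partial_tv\in L^q(0,T;V_3)\}$ and take a sequence $\{v_n\}$ bounded in $W$, say $\|v_n\|_{L^p(0,T;V_1)}+\|\partial_tv_n\|_{L^q(0,T;V_3)}\leq M$; the goal is to extract a subsequence converging strongly in $L^p(0,T;V_2)$. First I would record Ehrling's lemma: because $V_1\hookrightarrow\hookrightarrow V_2\hookrightarrow V_3$, for every $\varepsilon>0$ there is $C_\varepsilon>0$ with $\|w\|_{V_2}\leq\varepsilon\|w\|_{V_1}+C_\varepsilon\|w\|_{V_3}$ for all $w\in V_1$, proved by the usual contradiction argument exploiting compactness of the first embedding. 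Raising this to the power $p$ and integrating in $t$ gives $\|w\|_{L^p(0,T;V_2)}\leq 2\varepsilon\|w\|_{L^p(0,T;V_1)}+2C_\varepsilon\|w\|_{L^p(0,T;V_3)}$ for every $w\in L^p(0,T;V_1)$. Applied to $w=v_n-v_m$, the first term is at most $4M\varepsilon$, so it suffices to show that $\{v_n\}$ is relatively compact in $L^p(0,T;V_3)$: a subsequence that is Cauchy there is then, by the displayed inequality and letting $\varepsilon\to0$, Cauchy in $L^p(0,T;V_2)$ as well. Note that $V_1\hookrightarrow\hookrightarrow V_3$, being the composition of a compact and a continuous embedding.

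The second step is the equicontinuity of $\{v_n\}$ in time with values in $V_3$. From $v_n(t+h)-v_n(t)=\int_t^{t+h}\partial_tv_n(s)\,\mathrm{d}s$ and Hölder's inequality in $s$, one obtains $\|v_n(\cdot+h)-v_n(\cdot)\|_{L^p(0,T-h;V_3)}\leq C(M,T)\,h^{1-1/q}$ when $1<q<\infty$, and the sharper bound $C(M,T)\,h$ when $q=\infty$. The endpoint $q=1$ is the one requiring care, as $\{\partial_tv_n\}$ is bounded but not equi-integrable in $L^1(0,T;V_3)$; there one writes, with $\phi_n\defeq\|\partial_tv_n(\cdot)\|_{V_3}$, $\big(\int_t^{t+h}\phi_n(s)\,\mathrm{d}s\big)^p\leq M^{p-1}\int_t^{t+h}\phi_n(s)\,\mathrm{d}s$ and applies Fubini to still get $\|v_n(\cdot+h)-v_n(\cdot)\|_{L^p(0,T-h;V_3)}^p\leq M^p h$. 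In every case the time-translation modulus tends to $0$ as $h\to0^+$ uniformly in $n$, and the behaviour near $t=0$ and $t=T$ is absorbed by truncation together with the uniform integrability of $t\mapsto\|v_n(t)\|_{V_3}^p$ on short intervals.

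Finally I would conclude relative compactness in $L^p(0,T;V_3)$ by a time-mollification argument. Mollifying in time by $\rho_\delta$, one has $\|(v_n\ast\rho_\delta)(t)\|_{V_1}\leq C_\delta M$ for all $t$ and $n$, so by $V_1\hookrightarrow\hookrightarrow V_3$ the values $\{(v_n\ast\rho_\delta)(t):n,\,t\}$ lie in a fixed compact subset of $V_3$; together with the Lipschitz bound $\|(v_n\ast\rho_\delta)(t)-(v_n\ast\rho_\delta)(t')\|_{V_3}\leq C_\delta M\,|t-t'|$, the Arzel\'a--Ascoli theorem makes $\{v_n\ast\rho_\delta\}_n$ precompact in $C([0,T];V_3)$, hence in $L^p(0,T;V_3)$, for each fixed $\delta$. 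Since moreover $\sup_n\|v_n-v_n\ast\rho_\delta\|_{L^p(0,T;V_3)}\to0$ as $\delta\to0$ by the time-translation estimate above, a standard $\varepsilon/2$-net argument shows $\{v_n\}$ is totally bounded, hence relatively compact, in $L^p(0,T;V_3)$, as required. The main obstacle I anticipate is the rigorous Bochner-space bookkeeping in this last step --- measurability of $t\mapsto v_n(t)$ into $V_3$, legitimacy of Fubini and of differentiating $v_n\ast\rho_\delta$ under the integral, and the separate handling of the $q=1$ endpoint and of the intervals near $t=0$ and $t=T$; separability of the spaces $V_i$ enters precisely to reduce these measurability points to the scalar case, whereas reflexivity is not genuinely needed for the argument.
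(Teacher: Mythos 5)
The paper does not prove this lemma at all: it is quoted as a known result with a pointer to Simon's paper and to Lemma 6.3 of Feireisl's book, so there is no in-paper argument to compare against. Your reconstruction is the standard Lions--Simon proof and is essentially correct: Ehrling's interpolation inequality reduces matters to relative compactness in $L^p(0,T;V_3)$; the time-translation estimate (including your correct special handling of $q=1$ via $\big(\int_t^{t+h}\phi_n\big)^p\leq M^{p-1}\int_t^{t+h}\phi_n$ and Fubini) gives the uniform modulus of continuity; and mollification plus Arzel\`a--Ascoli plus an $\varepsilon$-net closes the argument. Your closing remarks are also accurate --- reflexivity of the $V_i$ is not needed for this route, which is precisely Simon's improvement over the original Aubin--Lions statement.

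The one place where you wave your hands more than is safe is the endpoint truncation. You invoke ``the uniform integrability of $t\mapsto\|v_n(t)\|_{V_3}^p$ on short intervals,'' but equi-integrability is \emph{not} a consequence of mere boundedness in $L^p(0,T;V_1)$, so as written this is an unproved claim. It does hold here, but for a reason you should make explicit: since $\partial_tv_n\in L^q(0,T;V_3)$ with $q\geq1$, each $v_n$ has a representative in $C([0,T];V_3)$ with
\begin{equation*}
\sup_{t\in[0,T]}\|v_n(t)\|_{V_3}\leq C(T)\left(\|v_n\|_{L^p(0,T;V_3)}+\|\partial_tv_n\|_{L^1(0,T;V_3)}\right)\leq C(T)M,
\end{equation*}
so $\|v_n\|_{L^p(0,\eta;V_3)}^p\leq C\eta$ uniformly in $n$, and likewise near $t=T$; this is what licenses the truncation and also makes the convolution $v_n\ast\rho_\delta$ well defined up to the boundary after a harmless extension. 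With that bound inserted, the proof is complete.
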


Next, in order to ensure the H\"older continuity of the approximate concentrations in the later analysis, we will use the the following result, which is mainly due to De Giorgi, Nash and Moser;  see \cite{lady} for more details.
\begin{theorem}\label{degiorgi}
Suppose that there exist positive constants $C_1, C_2>0$ such that
\[\|\boldsymbol{K}\|_{L^{\infty}(Q_T)^{d\times d}}\leq C_1,\qquad\boldsymbol{K}\boldsymbol{b}\cdot\boldsymbol{b}\geq C_2|\boldsymbol{b}|^2\,\,\,{\rm{for}}\,\,{\rm{all}}\,\,\boldsymbol{b}\in\R^d.\]
Assume further that $\boldsymbol{g}\in L^r(0,T;L^q(\Omega)^d)$ where $q$ and $r$ are arbitrary constants satisfying
\[\frac{2}{r}+\frac{d}{q}=1-\varepsilon,\]
with
\[q\in\left[\frac{d}{1-\varepsilon},\infty\right],\,\,\,r\in\left[\frac{2}{1-\varepsilon},\infty\right]\,\,\,{\rm{and}}\,\,\,0<\varepsilon<1.\]
Let $c\in L^{\infty}(0,T;L^2(\Omega))\cap L^2(0,T;W^{1,2}_0(\Omega))$ be a weak solution of $\partial_tc-{\rm{div}}\,(\boldsymbol{K}(x,t)\nabla c(x,t))={\rm{div}}\,\boldsymbol{g}$ in the sense that
\[\langle\partial_tc,\varphi\rangle+\int_{\Omega}\boldsymbol{K}\nabla c\cdot\nabla\varphi=\int_{\Omega}\boldsymbol{g}\cdot\nabla\varphi\dx\qquad\forall\varphi\in W^{1,2}_0(\Omega).\]
If $c_{|_{\Gamma_T}}\in C^{\beta,\beta/2}(\Gamma_T)$ for some $\beta\in(0,1)$, then there exists an $\alpha\in(0,\beta]$ depending on $d$, $C_1$, $C_2$, $\beta$, $\boldsymbol{g}$, $q$ and $r$ such that
\[c\in C^{\alpha,\alpha/2}(\overline{Q}_T),\]
with
\[\|c\|_{C^{\alpha,\alpha/2}}\leq C(d,C_1,C_2,\beta,\|\boldsymbol{g}\|_{r,q}).\]\\

\end{theorem}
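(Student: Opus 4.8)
The plan is to prove Theorem \ref{degiorgi} by reducing it to the classical interior and boundary regularity results of the De Giorgi--Nash--Moser school for parabolic equations in divergence form, as presented in Ladyzhenskaya--Solonnikov--Ural'tseva \cite{lady}. First I would observe that the uniform ellipticity assumptions on $\boldsymbol{K}$ (boundedness from above by $C_1$ and coercivity from below by $C_2|\boldsymbol{b}|^2$) are exactly the structure conditions required to place the equation $\partial_t c - \op{div}(\boldsymbol{K}(x,t)\nabla c) = \op{div}\,\boldsymbol{g}$ in the class of linear uniformly parabolic equations with bounded measurable coefficients and a divergence-form right-hand side. The only genuinely quantitative hypothesis to check is that the forcing $\boldsymbol{g}$ is integrable enough: the condition $\frac{2}{r} + \frac{d}{q} = 1 - \varepsilon$ with $q \geq \frac{d}{1-\varepsilon}$ and $r \geq \frac{2}{1-\varepsilon}$ is precisely the subcriticality gap (a $\varepsilon$-margin below the scaling-critical Morrey exponent) that guarantees the local boundedness and oscillation-decay estimates go through. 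I would state the relevant theorem from \cite{lady} (Chapter III, the parabolic De Giorgi estimates) with $\boldsymbol{g}$ in this Lebesgue--Bochner class, yielding an interior modulus of continuity of parabolic H\"older type with exponent $\alpha_1 \in (0,1)$ depending only on $d$, $C_1$, $C_2$, $q$, $r$ and $\|\boldsymbol{g}\|_{L^r(0,T;L^q)}$.

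Next I would handle the behaviour up to the parabolic boundary $\Gamma_T = S_T \cup (\overline{\Omega} \times \{0\})$. Here the boundary data $c_{|_{\Gamma_T}}$ is assumed to lie in $C^{\beta,\beta/2}(\Gamma_T)$, so one subtracts a suitable parabolic-H\"older extension $\Phi$ of the boundary datum into $\overline{Q}_T$ (for instance via a standard Whitney-type extension respecting the parabolic metric $d_p$), and applies the boundary version of the De Giorgi--Nash--Moser estimate to $c - \Phi$, which solves an equation of the same type with a modified but still admissible right-hand side (the extra terms $\op{div}(\boldsymbol{K}\nabla\Phi)$ and $\partial_t\Phi$ are absorbed into the data, using that the Lipschitz character of $\partial\Omega$ permits the boundary estimates). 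This produces a boundary modulus of continuity with exponent $\alpha_2 \in (0,\beta]$. Taking $\alpha \defeq \min\{\alpha_1,\alpha_2\} \in (0,\beta]$ and combining the interior and boundary estimates by a routine covering argument over $\overline{Q}_T$ gives $c \in C^{\alpha,\alpha/2}(\overline{Q}_T)$ together with the quantitative bound $\|c\|_{C^{\alpha,\alpha/2}} \leq C(d,C_1,C_2,\beta,\|\boldsymbol{g}\|_{r,q})$.

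The main obstacle, and the part deserving the most care, is verifying that the precise integrability hypothesis on $\boldsymbol{g}$ is the correct one for the cited estimates and tracking how $\varepsilon$ (equivalently the pair $(q,r)$) enters the exponent $\alpha$ and the constant $C$; a secondary technical point is the construction of the parabolic-H\"older boundary extension $\Phi$ on a merely Lipschitz domain and the verification that subtracting it keeps the right-hand side in the admissible class. Both of these are standard but must be stated carefully, since in the later application the function $\boldsymbol{g}$ will come from the term $c\uu$ (or rather from rewriting the convective term), whose integrability is exactly what the choice $p^- > d$ is designed to secure. Since this is a known result I would keep the write-up brief, citing \cite{lady} for the core estimates and only spelling out the reduction steps and the bookkeeping of constants.
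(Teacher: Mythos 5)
The paper itself gives no proof of this theorem: it is stated as a quotation of the classical parabolic De Giorgi--Nash--Moser theory, with the reader referred to \cite{lady}. Your overall plan --- place the equation in the class of uniformly parabolic divergence-form equations with bounded measurable coefficients, check that $\frac{2}{r}+\frac{d}{q}=1-\varepsilon$ is the subcritical integrability gap required for the local boundedness and oscillation-decay estimates, and combine interior and boundary estimates by a covering argument --- is therefore exactly the route the paper (implicitly) takes, and most of your sketch is a faithful reconstruction of it.

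There is, however, one step that would fail if carried out as written: the boundary estimate cannot be obtained by subtracting a parabolic-H\"older extension $\Phi$ of the boundary datum. A function that is merely $C^{\beta,\beta/2}$ on $\Gamma_T$ admits no extension whose gradient lies in an admissible class; the best one can arrange is $|\nabla\Phi|\lesssim \mathrm{dist}(\cdot,\Gamma_T)^{\beta-1}$, which for general $\beta\in(0,1)$ is not in $L^r(0,T;L^q(\Omega))$ with $\frac{2}{r}+\frac{d}{q}<1$, and $\partial_t\Phi$ is similarly out of control. So the extra terms $\op{div}(\boldsymbol{K}\nabla\Phi)$ and $\partial_t\Phi$ cannot be ``absorbed into the data,'' and $c-\Phi$ does not solve an equation of the admissible type. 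The classical treatment in \cite{lady} (Chapter III, the boundary analogues of the interior oscillation estimates) proves H\"older continuity up to $\Gamma_T$ directly: one runs the De Giorgi iteration on cylinders centred at boundary points, using the measure-density property of a Lipschitz domain (condition (A)) and comparing $\operatorname{ess\,osc}\,c$ over shrinking cylinders with the oscillation of the boundary datum on the same cylinders --- no subtraction of an extension is involved, and this is where the exponent $\alpha\leq\beta$ comes from. If you replace your subtraction step with a citation of those boundary oscillation estimates, the remainder of your argument (interior exponent $\alpha_1$, boundary exponent $\alpha_2\in(0,\beta]$, $\alpha=\min\{\alpha_1,\alpha_2\}$, covering of $\overline{Q}_T$) is correct and matches what the paper relies on.
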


Now we move to function space theory for variable-exponent spaces. In recent years, the mathematical study of variable-exponent spaces $L^{p(\cdot)}$ has been an active research area. One of the major breakthroughs was the identification of the minimum regularity of the exponent $p(\cdot)$, which guarantees the possibility to extend various results from standard Lebesgue space theory to variable-exponent spcaes: {\textit{log-H\"older continuity}}. Most importantly, this regularity on $p(\cdot)$  ensures the boundedness of the Hardy--Littlewood maximal operator on $L^{p(\cdot)}$, which enables one to use tools from harmonic analysis. More precisely, we introduce a subset $\mathcal{P}^{\rm{log}}(Q_T)\subset\mathcal{P}(Q_T)$ as the class of log-H\"older continuous functions, satisfying
\begin{equation}\label{logh}
|p(z_1)-p(z_2)|\leq\frac{C_{\rm{log}}(p)}{-\log|z_1-z_2|}\,\,\,\,\,\forall z_1,z_2\in Q_T:0<|z_1-z_2|\leq\frac{1}{2}.
\end{equation}

The following lemma shows that a parabolic H\"older continuous function on $Q_T$ automatically belongs to this class.
\begin{lemma}\label{Hinclusion}
Let $z_1$, $z_2\in Q_T$ with $|z_1-z_2|<\frac{1}{8}$. Then, for any $\alpha\in(0,1)$, there exists a $C>0$ such that
\[d_p(z_1,z_2)^{\alpha}\leq\frac{C}{-{\rm{log}}\,|z_1-z_2|}.\]
\end{lemma}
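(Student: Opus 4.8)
The plan is to reduce the parabolic-metric estimate to a one-variable calculus fact. Write $r \defeq |z_1 - z_2| < \tfrac18$. Since $|t_1 - t_2| \leq |z_1 - z_2|^2 \leq r$ (note $|z_1-z_2|<1$ so $|z_1-z_2|^2\le|z_1-z_2|$) and $|x_1 - x_2| \leq |z_1 - z_2| = r$, we get
\[
d_p(z_1,z_2) = |x_1-x_2| + |t_1-t_2|^{1/2} \leq r + r^{1/2} \leq 2 r^{1/2},
\]
using $r \leq r^{1/2}$ for $r \leq 1$. Hence $d_p(z_1,z_2)^\alpha \leq 2^\alpha r^{\alpha/2}$. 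So it suffices to prove that there is a constant $C = C(\alpha) > 0$ with
\[
r^{\alpha/2} \leq \frac{C}{-\log r} \qquad \text{for all } 0 < r < \tfrac18,
\]
and then absorb the factor $2^\alpha$ into $C$.

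For the remaining one-variable inequality I would argue as follows. Both sides are positive on $(0,\tfrac18)$ since $-\log r > \log 8 > 0$ there. The inequality is equivalent to $g(r) \defeq r^{\alpha/2}(-\log r) \leq C$. As $r \to 0^+$, $r^{\alpha/2}(-\log r) \to 0$ (polynomial beats logarithm), and $g$ is continuous on the compact interval $[0,\tfrac18]$ after defining $g(0) = 0$; therefore $g$ is bounded there, say by $C$. Equivalently and more explicitly, one can substitute $s = -\log r > \log 8$ and bound $s\, e^{-\alpha s/2}$, whose maximum over $s > 0$ is $\frac{2}{\alpha e}$ attained at $s = 2/\alpha$; taking $C = \max\{2^\alpha \cdot \tfrac{2}{\alpha e},\, 1\}$ (or simply $C = \frac{2^{\alpha+1}}{\alpha e}$ enlarged if needed to handle the range $s \le 2/\alpha$ not occurring, which it may or may not depending on $\alpha$) gives an explicit admissible constant. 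Combining with the reduction above yields $d_p(z_1,z_2)^\alpha \leq \tfrac{C}{-\log|z_1-z_2|}$ as claimed.

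There is essentially no obstacle here; the only mild subtlety is the bookkeeping to ensure $-\log|z_1 - z_2|$ is positive and bounded away from $0$, which is exactly why the hypothesis $|z_1 - z_2| < \tfrac18$ (rather than just $< 1$) is imposed — it guarantees $-\log|z_1-z_2| > \log 8$, so dividing by it is harmless and the right-hand side does not blow up or change sign. The use of $\tfrac18$ also conveniently matches the constant $\tfrac12$ threshold in the log-Hölder definition \eqref{logh} after the factor-of-two manipulations, so that Lemma~\ref{Hinclusion} immediately gives the inclusion $C^{\alpha,\alpha/2}(Q_T) \subset \mathcal{P}^{\log}(Q_T)$ for exponents valued in an appropriate range; I would state that consequence right after the proof.
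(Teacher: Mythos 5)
Your argument is correct and is essentially the paper's own proof: reduce to $d_p(z_1,z_2)\le 2\,|z_1-z_2|^{1/2}$ and then invoke the elementary one-variable bound $x^{\beta}\le C/(-\log x)$ on $(0,\tfrac12)$, which you make explicit via the substitution $s=-\log r$ and the maximum of $s\,e^{-\alpha s/2}$. One minor slip worth fixing: the intermediate claim $|t_1-t_2|\le|z_1-z_2|^2$ is false in general (take $x_1=x_2$ and $|t_1-t_2|$ small), but it is also unnecessary, since $|t_1-t_2|\le|z_1-z_2|=r$ already gives $|t_1-t_2|^{1/2}\le r^{1/2}$, which is all your displayed chain actually uses.
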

\begin{proof}
From the definition of the parabolic distance, we have $d_p(z_1,z_2)<\frac{1}{2}$. Furthermore, by an elementary calculation, we can easily verify the following inequalities:
\[d_p(z_1,z_2)\leq2|z_1-z_2|^{\frac{1}{2}}\,\,\,{\rm{for}}\,\,\,|z_1-z_2|<1\qquad{\rm{and}}\qquad x^{\alpha}\leq\frac{C}{-{\rm{log}}\,x}\,\,\,{\rm{for}}\,\,\,0<x<\frac{1}{2}.\]
Therefore, we have
\[d_p(z_1,z_2)^{\alpha}\leq2^{\alpha}|z_1-z_2|^{\frac{\alpha}{2}}\leq\frac{C2^{\alpha}}{-{\rm{log}}\,|z_1-z_2|}.\]
\end{proof}

What we need in this paper is a density result for the function space $W_{p(\cdot)}(Q_T)$ defined in Section 2. First, from \cite{MNRR1996}, we infer the existence of a sequence $\{\lambda_j\}_{j=1}^{\infty}\subset\R$ and a sequence of functions $\{\ww_j\}^{\infty}_{j=1}\subset W^{\ell,2}_{0,\,{\rm{div}}}(\Omega)^d$, $\ell\in\mathbb{N}$, satisfying
\begin{itemize}
\item each $\ww_j$ is an eigenfunction associated with the eigenvalue $\lambda_j$ in the following sense:
\[\langle\ww_j,\boldsymbol{\phi}\rangle_{W^{\ell,2}_0}=\lambda_j\int_{\Omega}\ww_j\cdot \boldsymbol{\phi}\dx\qquad\forall\boldsymbol{\phi}\in W^{\ell,2}_{0,\rm{div}}(\Omega)^d,\]
\item $\int_{\Omega}\ww_i\cdot\ww_j\dx=\delta_{ij}$ for all $i$, $j\in\mathbb{N}$,
\item $1\leq\lambda_1\leq\lambda_2\leq\cdots$ and $\lambda_j\rightarrow\infty$,
\item $\langle\frac{\ww_i}{\sqrt{\lambda_i}}, \frac{\ww_j}{\sqrt{\lambda_i}}\rangle_{W^{\ell,2}_0}=\delta_{ij}$ for all $i$, $j\in\mathbb{N}$,
\item $\{\ww_j\}^{\infty}_{j=1}$ is a basis of $W^{\ell,2}_{0,\,{\rm{div}}}(\Omega)^d$.
\end{itemize}
We choose $\ell>1+\frac{d}{2}$ so that $W^{\ell,2}_{0,\,{\rm{div}}}(\Omega)^d\hookrightarrow W^{1,\infty}_{0,\,{\rm{div}}}(\Omega)^d$. Then we have the following lemma, which is quoted from \cite{AS2009}. 
\begin{lemma}\label{maindensity}
Let $p\in\mathcal{P}^{\rm{log}}(Q_T)$. For each $\uu\in W_{p(\cdot)}(Q_T)$, there exists a sequence $\{d_j(t)\}^{\infty}_{j=1}\subset C^1([0,T])$, satisfying
\[\norm{\uu-\sum^m_{j=1}d_j\,\ww_j}_{W_{p(\cdot)}}\rightarrow0\qquad{\rm{as}}\,\,\,m\rightarrow\infty.\]
\end{lemma}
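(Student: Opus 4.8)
Following \cite{AS2009}, the plan is to prove this density result in two stages. First I would approximate an arbitrary $\uu\in W_{p(\cdot)}(Q_T)$, in the norm of $W_{p(\cdot)}(Q_T)$, by a function $\vv$ that is of class $C^1$ in time with values in $W^{\ell,2}_{0,\,{\rm{div}}}(\Omega)^d$. Then I would expand such a $\vv$ in the eigenfunction basis $\{\ww_j\}_{j=1}^{\infty}$ and show that its Galerkin truncations $\vv_m\defeq\sum_{j=1}^m d_j\,\ww_j$ converge to $\vv$ in $W_{p(\cdot)}(Q_T)$; a diagonal argument over the two stages then yields the claim.

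For the first stage, I would regularize $\uu$ in time and in space. Extending $\uu$ evenly across $t=0$ and $t=T$ and mollifying in the time variable produces functions that are smooth in $t$, still divergence-free and with zero trace; then, on each time slice, one regularizes in space. Since $\Omega$ is a bounded Lipschitz domain, a solenoidal field in $W^{1,p(\cdot)}_0(\Omega)^d$ vanishing on $\partial\Omega$ is a limit, in that norm, of smooth compactly supported solenoidal fields (mollification combined with a Bogovskii-type divergence correction on a partition of unity), and choosing the spatial regularization parameters independently of $t$ keeps the dependence on $t$ of class $C^1$ with values in $W^{\ell,2}_{0,\,{\rm{div}}}(\Omega)^d$. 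The convergence of both regularizations in $L^{p(\cdot)}$ is precisely the point at which the hypothesis $p\in\mathcal{P}^{\rm{log}}(Q_T)$ enters: via \eqref{logh} it guarantees the boundedness of the Hardy--Littlewood maximal operator on $L^{p(\cdot)}$, which dominates the mollifications and forces their convergence. A dominated-convergence argument in $t$ then produces, for every $\eta>0$, a function $\vv\in C^1([0,T];W^{\ell,2}_{0,\,{\rm{div}}}(\Omega)^d)$ with $\norm{\uu-\vv}_{W_{p(\cdot)}}<\eta$; this is the content of \cite{AS2009}, to which I refer for the details.

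For the second stage, I would fix such a $\vv$ and set $d_j(t)\defeq\int_{\Omega}\vv(x,t)\cdot\ww_j(x)\dx$; since $\vv\in C^1([0,T];L^2(\Omega)^d)$ and $\boldsymbol{\phi}\mapsto\int_\Omega\boldsymbol{\phi}\cdot\ww_j\dx$ is a bounded linear functional on $L^2(\Omega)^d$, each $d_j$ lies in $C^1([0,T])$. By the eigenvalue relation listed above, the $W^{\ell,2}_0$-Fourier coefficient of $\vv(t)$ with respect to the orthonormal system $\{\ww_j/\sqrt{\lambda_j}\}_{j=1}^{\infty}$ equals $\sqrt{\lambda_j}\,d_j(t)$, so $\vv_m(t)=\sum_{j=1}^m d_j(t)\ww_j\to\vv(t)$ in $W^{\ell,2}_0(\Omega)$ for each $t$, and Bessel's inequality gives $\norm{\vv_m(t)}_{W^{\ell,2}_0}^2=\sum_{j=1}^m\lambda_j\,d_j(t)^2\le\norm{\vv(t)}_{W^{\ell,2}_0}^2$, a bound uniform in $m$ and in $t\in[0,T]$ because $\vv\in C^0([0,T];W^{\ell,2}_0)$. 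Using the embedding $W^{\ell,2}_{0,\,{\rm{div}}}(\Omega)^d\hookrightarrow W^{1,\infty}_{0,\,{\rm{div}}}(\Omega)^d$ (recall $\ell>1+\frac{d}{2}$), it follows that for each fixed $t$ both $\norm{\vv(t)-\vv_m(t)}_{L^{\infty}(\Omega)}$ and $\norm{\nabla\vv(t)-\nabla\vv_m(t)}_{L^{\infty}(\Omega)}$ tend to $0$ as $m\to\infty$ and are bounded uniformly in $m$ and $t$. Since $\abs{Q_T}<\infty$ and $p^+<\infty$, the first of these together with dominated convergence in $t$ gives $\vv_m\to\vv$ in $L^2(Q_T)^d$, while the second gives $\int_{Q_T}\abs{\nabla\vv-\nabla\vv_m}^{p(x,t)}\dx\dt\to0$, hence $\norm{\nabla\vv-\nabla\vv_m}_{L^{p(\cdot)}(Q_T)}\to0$. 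Thus $\norm{\vv-\vv_m}_{W_{p(\cdot)}}\to0$, and combining with the first stage completes the argument.

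I expect the first stage to be the main obstacle: one must regularize so as to preserve simultaneously the divergence-free constraint and the homogeneous boundary condition while still obtaining convergence in the variable-exponent norm, and this is exactly where the log-H\"older hypothesis is indispensable, since without the accompanying maximal-operator bound the regularized fields need not converge in $L^{p(\cdot)}(Q_T)$. The second stage, by contrast, is essentially soft once the spectral properties of $\{\ww_j\}_{j=1}^{\infty}$ recorded above and the embedding $W^{\ell,2}_0\hookrightarrow W^{1,\infty}$ are in hand.
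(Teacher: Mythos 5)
The paper offers no proof of this lemma at all: it is quoted verbatim from \cite{AS2009}, with only the remark that restricting to solenoidal fields is harmless because the basis $\{\ww_j\}_{j=1}^\infty$ is itself solenoidal. Your two-stage outline is the standard argument behind that citation, and your second stage --- spectral truncation of a fixed $\vv\in C^1([0,T];W^{\ell,2}_{0,\mathrm{div}}(\Omega)^d)$, using the eigenvalue relation to identify $\vv_m(t)$ with the $W^{\ell,2}_0$-Fourier partial sums, Bessel's inequality for the uniform-in-$(m,t)$ bound, the embedding $W^{\ell,2}_0\hookrightarrow W^{1,\infty}$, and dominated convergence of the modular (valid since $p^+<\infty$ and $|Q_T|<\infty$) --- is complete and correct. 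The first stage, where one must regularize in space and time while simultaneously preserving the divergence constraint and the homogeneous boundary condition and still obtain $L^{p(\cdot)}$-convergence via the log-H\"older/maximal-operator machinery, is where all the substantive work lies, and you, like the paper, ultimately defer it to \cite{AS2009}; so there is no genuine divergence between your route and the paper's, only a more explicit account of the easy half.
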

In fact, unlike the space used in \cite{AS2009}, here we are dealing with the function space of solenoidal functions. However, that does not affect the proof of the lemma as the basis $\{\ww_j\}^{\infty}_{j=1}$ consists of solenoidal functions in this paper.

\end{section}

\begin{section}{Proof of the main theorem}
\begin{subsection}{Galerkin approximations}
Let $\{\ww_j\}^{\infty}_{j=1}$ be a basis of $W^{\ell,2}_{0,{\rm{div}}}(\Omega)^d,$ constructed in the previous section, and let $\{z_k\}^{\infty}_{k=1}$ be a basis of $W^{1,2}_0(\Omega)$ which is orthonormal in $L^2(\Omega)$. We are looking for a sequence of approximate solutions $\{\uu^{N,M},\,c^{N,M}\}^{\infty}_{N,M=1}$ of the form
\[\uu^{N,M}=\sum_{i=1}^{N}a_{i}^{N,M}\ww_i,\qquad c^{N,M}=\sum_{i=1}^{M}b^{N,M}_iz_i,\]
where $\boldsymbol{a}^{N,M}=\{a^{N,M}_j\}^N_{j=1}:(0,T)\rightarrow\R^N$ and $\boldsymbol{b}^{N,M}=\{b^{N,M}_k\}_{k=1}^M:(0,T)\rightarrow\R^M$, which solve the following system of ordinary differential equations:

\begin{equation}\label{Gmain1}
\int_{\Omega} \left( \partial_t\uu^{N,M}\cdot\ww_j-(\uu^{N,M}\otimes\uu^{N,M}):\DD\ww_j+\SSS^{N,M}:\DD\ww_j \right) \dx=\int_{\Omega}\boldsymbol{f}\cdot\ww_j\dx\,\,\,
\forall j=1,\ldots,N
\end{equation}

\begin{equation}\label{Gmain1_initial}
\uu^{N,M}(\cdot,0)=\uu^{N,M}_0=P^N\uu_0,
\end{equation}

\begin{equation}\label{Gmain2}
\int_{\Omega}\partial_t c^{N,M}\cdot z_k\dx-\int_{\Omega}\uu^{N,M}c^{N,M}\cdot\nabla z_k\dx+\int_{\Omega}\q^{N,M}_c\cdot\nabla z_k\dx=0\,\,\,\forall k=1,\ldots M,
\end{equation}

\begin{equation}\label{Gmain2_initial}
c^{N,M}(\cdot,0)=c^{N,M}_0=P^Mc_0,
\end{equation}
where 
\[\SSS^{N,M}=\SSS(c^{N,M},\DD\uu^{N,M}),\qquad\q_c^{N,M}=\q_c(c^{N,M},\nabla c^{N,M},\DD\uu^{N,M}),\]
and $P^{N}$, $P^{M}$ denote the $L^2$-orthogonal projections onto the finite-dimensional linear spaces $A_N=$ span$\{\ww_1,\ldots,\ww_N\}$, $B_M=$ span$\{z_1,\ldots,z_M\}$ respectively.

Carath\'eodory's theorem (see \cite{Z1990}, chapter 30) then guarantees the existence of solutions to \eqref{Gmain1}--\eqref{Gmain2_initial} at least for a short time interval. The uniform estimates that we will derive below enable us to extend the solution onto the whole time interval $(0,T)$.
\end{subsection}

\begin{subsection}{Uniform estimates independent of $M$}
We first derive some uniform estimates that are independent of $M$. Multiplying the $j$th equation in \eqref{Gmain1} by $a^{N,M}_j$, followed by taking the sum over $j=1,\ldots,N$ and integrating the result over $(0,t)$, we obtain
\[\|\uu^{N,M}(t)\|^2_2+2\int_0^t\int_{\Omega}\SSS^{N,M}:\DD\uu^{N,M}\dx\,\mathrm{d}\tau=\|\uu_0^{N,M}\|^2_2+2\int^t_0\int_{\Omega}\boldsymbol{f}\cdot\uu^{N,M}\dx\,\mathrm{d}\tau.\]
Applying \eqref{S3} and Poincar\'e's inequality and using standard duality estimates together with Young's inequality, we conclude that
\begin{equation}\label{mainest1}
\sup_{t\in(0,T)}\|\uu^{N,M}(t)\|^2_2+\int^T_0\int_{\Omega}\left(|\nabla\uu^{N,M}|^{p^-}+|\SSS^{N,M}|^{(p^+)'}\right)\dx\dt\leq C,
\end{equation}
and hence, by Proposition \ref{mainemb},
\begin{equation}\label{mainest1-3}
\|\uu^{N,M}\|_{L^{\frac{p^-(d+2)}{d}}(Q_T)}\leq C.
\end{equation}

Similarly, multiplying the $k$th equation in \eqref{Gmain2} by $b^{N,M}_k$, taking the sum over $k=1,\ldots,M$ and integrating the result over $(0,t)$, we arrive at
\[\|c^{N,M}(t)\|^2_2+2\int^t_0\int_{\Omega}\q_c^{N,M}\cdot\nabla c^{N,M}\dx\,\mathrm{d}\tau=\|c_0^{N,M}\|^2_2.\]
By using \eqref{q1} and \eqref{q2}, we have
\begin{equation}\label{mainest2}
\sup_{t\in(0,T)}\|c^{N,M}(t)\|^2_2+\int^T_0\int_{\Omega}\left(|\nabla c^{N,M}|^2+|\q_c^{N,M}|^2\right)\dx\dt\leq C.
\end{equation}

Next, we also need to estimate the time derivative of $\aaa^{N,M}$. We therefore multiply the $j$th equation in \eqref{Gmain1} by $\frac{{\mathrm{d}}}{{\mathrm{d}}t}a^{N,M}_j$, sum over $j=1,\ldots,N$, and integrate over time. It is obvious that the first term becomes $\int^T_0|\frac{{\mathrm{d}}}{{\mathrm{d}}t}\aaa^{N,M}|^2\dt$.
For the convective term, by \eqref{mainest1} we have
\begin{align*}
&\hspace{-15mm}\int_0^T\int_{\Omega}(\uu^{N,M}\otimes\uu^{N,M}):\left(\sum^{N}_{j=1}\frac{{\mathrm{d}}}{{\mathrm{d}}t}a^{N,M}_j\nabla\ww_j\right)\dx\dt\\
&\leq\int^T_0\int_{\Omega}|\uu^{N,M}\otimes\uu^{N,M}|\left(\sum^{N}_{j=1}\bigg|\frac{{\mathrm{d}}}{{\mathrm{d}}t}a^{N,M}_j\bigg|^2\right)^{\frac{1}{2}}\left(\sum^{N}_{j=1}|\nabla \ww_j|^2\right)^{\frac{1}{2}}\dx\dt\\
&\leq\int^T_0\left(\sum^{N}_{j=1}\bigg|\frac{{\mathrm{d}}}{{\mathrm{d}}t}a^{N,M}_j\bigg|^2\right)^{\frac{1}{2}}\int_{\Omega}|\uu^{N,M}\otimes\uu^{N,M}|\left(\sum^{N}_{j=1}|\nabla \ww_j|^2\right)^{\frac{1}{2}}\dx\dt\\
&\leq\left(\int^T_0\bigg|\frac{{\mathrm{d}}}{{\mathrm{d}}t}\aaa^{N,M}\bigg|^2\dt\right)^{\frac{1}{2}}\left(\int^T_0\left(\int_{\Omega}|\uu^{N,M}\otimes\uu^{N,M}|\left(\sum^{N}_{j=1}|\nabla \ww_j|^2\right)^{\frac{1}{2}}\dx\right)^2\dt\right)^{\frac{1}{2}}\\
&\leq C(N)\left(\int^T_0\bigg|\frac{{\mathrm{d}}}{{\mathrm{d}}t}\aaa^{N,M}\bigg|^2\dt\right)^{\frac{1}{2}}\left(\int^T_0\left(\int_{\Omega}|\uu^{N,M}\otimes\uu^{N,M}|\dx\right)^2\dt\right)^{\frac{1}{2}}\\
&\leq C(N)\left(\int^T_0\bigg|\frac{{\mathrm{d}}}{{\mathrm{d}}t}\aaa^{N,M}\bigg|^2\dt\right)^{\frac{1}{2}}\sup_{t\in(0,T)}\|\uu^{N,M}(t)\|^2_2\\
&\leq C(N)\left(\int^T_0\bigg|\frac{{\mathrm{d}}}{{\mathrm{d}}t}\aaa^{N,M}\bigg|^2\dt\right)^{\frac{1}{2}}.
\end{align*}
For the term involving the stress tensor, note that

\begin{align*}
&\hspace{-15mm}\int^T_0\int_{\Omega}\SSS^{N,M}:\left(\sum^{N}_{j=1}\frac{{\mathrm{d}}}{{\mathrm{d}}t}a^{N,M}_j\nabla \ww_j\right)\dx\dt\\
&\leq\int^T_0\int_{\Omega}|\SSS^{N,M}|\left(\sum^{N}_{j=1}\bigg|\frac{{\mathrm{d}}}{{\mathrm{d}}t}a^{N,M}_j\bigg|^2\right)^{\frac{1}{2}}\left(\sum^{N}_{j=1}|\nabla \ww_j|^2\right)^{\frac{1}{2}}\dx\dt\\
&\leq\int^T_0\left(\sum^{N}_{j=1}\bigg|\frac{{\mathrm{d}}}{{\mathrm{d}}t}a_j^{N,M}\bigg|^2\right)^{\frac{1}{2}}\int_{\Omega}|\SSS^{N,M}|\left(\sum^{N}_{j=1}|\nabla \ww_j|^2\right)^{\frac{1}{2}}\dx\dt\\
&\leq\left(\int^T_0\bigg|\frac{{\mathrm{d}}}{{\mathrm{d}}t}\aaa^{N,M}\bigg|^2\dt\right)^{\frac{1}{2}}\left(\int^T_0\left(\int_{\Omega}|\SSS^{N,M}|\left(\sum^{N}_{i=1}|\nabla \ww_i|^2\right)^{\frac{1}{2}}\dx\right)^2\dt\right)^{\frac{1}{2}}\\
&\leq C(N)\left(\int^T_0\bigg|\frac{{\mathrm{d}}}{{\mathrm{d}}t}\aaa^{N,M}\bigg|^2\dt\right)^{\frac{1}{2}}\left(\int^T_0\left(\int_{\Omega}|\SSS^{N,M}|\dx\right)^2\dt\right)^{\frac{1}{2}}.
\end{align*}
By equivalence of norms in finite-dimensional spaces,
\[\int_{\Omega}|\SSS^{N,M}|\dx\leq\int_{\Omega}\left(C_1+C_2|\DD\uu^{N,M}|^{p^+-1}\right)\dx\leq C+C(N)\|\uu^{N,M}\|^{p^+-1}_2.\]
Hence, by \eqref{mainest1}, we have
\begin{align*}
&\hspace{-15mm}\int^T_0\int_{\Omega}\SSS^{N,M}:\left(\sum^{N}_{j=1}\frac{d}{dt}a^{N,M}_j\nabla \ww_j\right)\dx\dt\\
&\leq C(N)\left(\int^T_0\bigg|\frac{d}{dt}\aaa^{N,M}\bigg|^2\dt\right)^{\frac{1}{2}}\left(\int^T_0\left(\int_{\Omega}|\SSS^{N,M}|\dx\right)^2\dt\right)^{\frac{1}{2}}\\
&\leq C(N)\left(\int^T_0\bigg|\frac{d}{dt}\aaa^{N,M}\bigg|^2\dt\right)^{\frac{1}{2}}\left(C+C(N)\int^T_0\|\uu^{N,M}\|^{2(p^+-1)}_2\dt\right)^{\frac{1}{2}}\\
&\leq C(N)\left(\int^T_0\bigg|\frac{d}{dt}\aaa^{N,M}\bigg|^2\dt\right)^{\frac{1}{2}}\left(C+C(N)\sup_{t\in(0,T)}\|\uu^{N,M}\|^{2(p^+-1)}_2\right)^{\frac{1}{2}}\\
&\leq C(N)\left(\int^T_0\bigg|\frac{d}{dt}\aaa^{N,M}\bigg|^2\dt\right)^{\frac{1}{2}}.
\end{align*}
Finally for the external force term,
\begin{align*}
\int^T_0\int_{\Omega}\boldsymbol{f}\cdot\left(\sum^{N}_{j=1}\frac{d}{dt}a^{N,M}_j\ww_j\right)\dx\dt
&\leq\int^T_0\int_{\Omega}|\boldsymbol{f}|\left(\sum^N_{j=1}\bigg|\frac{{\mathrm{d}}}{{\mathrm{d}}t}a^{N,M}_j\bigg|^2\right)^{\frac{1}{2}}\left(\sum^N_{j=1}|\ww_j|^2\right)^{\frac{1}{2}}\dx\dt\\
&\leq C(N)\int^T_0\left(\sum^N_{j=1}\bigg|\frac{{\mathrm{d}}}{{\mathrm{d}}t}a^{N,M}_j\bigg|^2\right)^{\frac{1}{2}}\|\boldsymbol{f}\|_2\dt\\
&\leq C(N)\|\boldsymbol{f}\|_{L^2(Q_T)}\left(\int^T_0\bigg|\frac{{\mathrm{d}}}{{\mathrm{d}}t}\aaa^{N,M}\bigg|^2\dt\right)^{\frac{1}{2}}.
\end{align*}
Collecting all the terms above gives us the following uniform estimate, independent of $M$:
\begin{equation}\label{mainest3}
\int^T_0\bigg|\frac{{\mathrm{d}}}{{\mathrm{d}}t}\aaa^{N,M}\bigg|^2\dt\leq C(N).
\end{equation}

As a last step, we shall derive an estimate for the time derivative of the approximate concentration. Let $P^M_1$ be the orthogonal projection onto $B_M$ with respect to the $W^{1,2}_0(\Omega)$ inner product. Then, for any $\varphi\in W^{1,2}_0(\Omega)$, we have from \eqref{Gmain2} that
\begin{align*}
\bigg|\int_{\Omega}\partial_tc^{N,M}\varphi\dx\bigg|
&=\bigg|\int_{\Omega}\partial_tc^{N,M}P^M_1\varphi\dx\bigg|\\
&\leq\int_{\Omega}|\uu^{N,M}c^{N,M}||\nabla P^M_1\varphi|\dx+\int_{\Omega}|\q_c^{N,M}||\nabla P^M_1\varphi|\dx\\
&\leq\|\uu^{N,M}c^{N,M}\|_2\|P^M_1\varphi\|_{1,2}+\|\q_c^{N,M}\|_2\|P^M_1\varphi\|_{1,2}\\
&\leq C\left(\|\uu^{N,M}c^{N,M}\|_2+\|\q_c^{N,M}\|_2\right)\|\varphi\|_{1,2}.
\end{align*}
Integrating over the time interval $(0,T)$ together with \eqref{mainest1}, \eqref{mainest2}, Sobolev embedding and norm equivalence in finite-dimensional spaces leads us to
\begin{align*}
\int^T_0\norm{\partial_tc^{N,M}}^2_{W^{-1,2}(\Omega)}\dt
&\leq C\int^T_0(\|\uu^{N,M}c^{N,M}\|^2_2\dt+\|\q_c^{N,M}\|^2_2)\dt\\
&\leq C\int^T_0\|\uu^{N,M}\|^2_3\|c^{N,M}\|^2_6\dt+C\\
&\leq C(N)\int^T_0\|\uu^{N,M}\|^2_2\|\nabla c^{N,M}\|^2_2\dt+C\\
&\leq C(N).
\end{align*}
Therefore, we finally have
\begin{equation}\label{mainest4}
\|\partial_tc^{N,M}\|_{L^2(0,T;W^{-1,2}(\Omega))}\leq C(N).
\end{equation}
\end{subsection}

\begin{subsection}{The limit $M\rightarrow\infty$}
Having shown the uniform estimates \eqref{mainest1}--\eqref{mainest4}, we can establish some necessary convergence results for a selected (not relabelled) subsequence, as $M\rightarrow\infty$. First, by \eqref{mainest3},
\begin{equation}\label{1dconv}
\aaa^{N,M}\rightharpoonup\aaa^{N}\,\,\,{\rm{weakly}}\,\,{\rm{in}}\,\,W^{1,2}(0,T)\,\,{\rm{and}}\,\,{\rm{thus}}\,\,{\rm{strongly}}\,\,{\rm{in}}\,\,C([0,T]),
\end{equation}
from which we obtain the following uniform convergences results:
\begin{equation}\label{Mconv1}
\uu^{N,M}\rightrightarrows\uu^{N}\,\,\,{\rm{and}}\,\,\,\DD\uu^{N,M}\rightrightarrows\DD\uu^{N}.
\end{equation}

Next, by \eqref{mainest2} and \eqref{mainest4},
\begin{equation}\label{Mconv2}
c^{N,M}\rightharpoonup c^{N}\,\,\,{\rm{weakly}}\,\,\,{\rm{in}}\,\,\,\left\{z\in L^2(0,T;W^{1,2}_0(\Omega)),\,\,\partial_tz\in L^2(0,T;W^{-1,2}(\Omega))\right\}.
\end{equation}
Therefore by applying the Aubin--Lions lemma, we deduce that
\begin{equation}\label{ML2}
c^{N,M}\rightarrow c^{N}\,\,\,{\rm{strongly}}\,\,{\rm{in}}\,\,L^2(Q_T),
\end{equation}
which implies, for a (relabelled) subsequence,
\begin{equation}\label{lkaeconv}
c^{N,M}\rightarrow c^{N}\,\,\,{\rm{almost}}\,\,{\rm{everywhere}}\,\,{\rm{in}}\,\,Q_T.
\end{equation}

Next, from \eqref{Mconv1} and \eqref{lkaeconv},
\[\SSS^{N,M}\rightarrow \SSS^{N}\defeq\SSS(c^N,\DD\uu^N)\,\,\,{\rm{almost}}\,\,{\rm{everywhere}}\,\,{\rm{in}}\,\,Q_T.\]
Also, by \eqref{Mconv1} again, for sufficiently large $M\in\mathbb{N}$,
\[|\DD\uu^{N,M}|<1+|\DD\uu^N|\,\,\,{\rm{almost}}\,\,{\rm{everywhere}}\,\,{\rm{in}}\,\,\,Q_T.\]
Therefore, by \eqref{S1}, we have, for sufficiently large $M\in\mathbb{N}$, that
\begin{align*}
|\SSS^{N,M}|
&\leq C|\DD\uu^{N,M}|^{p(c^{N,M})-1}+C\\
&\leq C(1+|\DD\uu^N|)^{p(c^{N,M})-1}+C\\
&\leq C(1+|\DD\uu^N|)^{p^+-1}+C\in L^{(p^+)'}(Q_T).
\end{align*}
Thus, by the dominated convergence theorem, we have
\begin{equation}\label{SNMconv}
\SSS^{N,M}\rightarrow\SSS^N\,\,\,{\rm{strongly}}\,\,\,{\rm{in}}\,\,\,L^{(p^+)'}(Q_T)^{d\times d}.
\end{equation}
Finally, concerning $\q_c^{N,M}=\boldsymbol{K}(c^{N,M},|\DD\uu^{N,M}|)\nabla c^{N,M}$, by \eqref{lkaeconv}, \eqref{Mconv1} and the dominated convergence theorem,
\[\boldsymbol{K}(c^{N,M},|\DD\uu^{N,M}|)\rightarrow\boldsymbol{K}(c^{N},\DD\uu^{N})\,\,\,{\rm{strongly}}\,\,{\rm{in}}\,\,L^2(Q_T).\]
Therefore, together with \eqref{Mconv2},
\begin{equation}\label{lkconv6}
\q_c^{N,M}\rightharpoonup\q^{N}_c\defeq\q_c(c^{N},\nabla c^{N},\DD\uu^{N})\,\,\,{\rm{weakly}}\,\,{\rm{in}}\,\,L^2(Q_T)^d.
\end{equation}

The convergence results established above allow us to take the limit in our Galerkin approximation. First, by using \eqref{1dconv}, \eqref{Mconv1} and \eqref{SNMconv} we have
\begin{equation}\label{Gsecond1}
\int_{\Omega} \left( \partial_t\uu^{N}\cdot\ww_j-(\uu^{N}\otimes\uu^{N}):\DD\ww_j+\SSS^{N}:\DD\ww_j \right) \dx=\int_{\Omega}\boldsymbol{f}\cdot\ww_j\dx\,\,\,
\forall j=1,\ldots,N.
\end{equation}
Regarding the initial condition for the velocity, thanks to \eqref{1dconv} and the fact that $\aaa^{N,M}(0)=\aaa^N_0$ for all $M$, if we denote $\lim_{M\rightarrow\infty}\uu^{N,M}_0=\uu^N_0=P^N\uu_0$, we have
\[\uu^N(\cdot,0)=\uu^N_0.\]

Next, from \eqref{Mconv1}, \eqref{Mconv2}, \eqref{lkconv6} and a density argument, we have for a.e. $t\in(0,T)$ that
\begin{equation}\label{Gsecond2}
\langle\partial_t c^{N},\varphi\rangle-\int_{\Omega}\left(\uu^{N}c^{N}\cdot\nabla \varphi-\q_c^{N}\cdot\nabla \varphi\right)\dx=0\qquad\forall \varphi\in W^{1,2}_0(\Omega).
\end{equation}
We shall verify the attainment of the initial condition for the concentration. First, by \eqref{mainest2} and \eqref{mainest4}, we have
\begin{equation}\label{timeconti}
c^N\in C([0,T];L^2(\Omega)).
\end{equation}
Next, we integrate \eqref{Gmain2} over $t\in(0,\tau)$, and pass $M$ to the limit to get
\[\int_{\Omega}c^N(\tau)z_k\dx-\int^{\tau}_0\int_{\Omega}\left(\uu^Nc^N\cdot\nabla z_k-\q_c^N\cdot\nabla z_k\right)\dx\dt=\int_{\Omega}c_0z_k\dx,\]
where we have used that $\|c^{N,M}_0-c_0\|_2=\|P^Mc_0-c_0\|_2\rightarrow0$ as $M\rightarrow\infty$.
Since $\int_{\Omega}\uu^Nc^N\cdot\nabla z_k\dx$ and $\int_{\Omega}\q_c^N\cdot\nabla z_k\dx$ are integrable in time, as $\tau\rightarrow0$, we have
\[\int_{\Omega}c^N(\tau)z_k\dx\rightarrow\int_{\Omega}c_0z_k\dx.\]
Hence, together with \eqref{timeconti}, we finally obtain
\[\lim_{t\rightarrow0+}\|c^N(t)-c_0\|_2=0.\]
By \eqref{timeconti} again, we deduce that
\[c^N(0)=c_0\,\,\,{\rm{in}}\,\,\,L^2(\Omega).\]
\end{subsection}

\begin{subsection}{Maximum and minimum principles}
Before proceeding further, we shall derive maximum and minimum principles for the concentration by using standard tools for parabolic problems.

We begin with the maximum principle. Let $\varphi\defeq\chi_{[0,\tau]}(t)\max\{0,c^N-\tilde{c}_0\}$ where $\tilde{c}_0$ is the upper bound on the initial datum $c_0$. Clearly $\varphi\in W^{1,2}_0(\Omega)$ for a.e. $t\in(0,T$), so we test by $\varphi$ in \eqref{Gsecond2}. Integrating over the time $t\in(0,T)$ yields
\[0=\int^T_0\langle\partial_tc^N,\varphi\rangle\dt-\int^T_0\int_{\Omega}\left(\uu^Nc^N\cdot\nabla\varphi-\q^N_c\cdot\nabla\varphi\right)\dx\dt\eqdef{\rm{I}}+{\rm{II}}+{\rm{III}},\]
with an obvious numbering. We can then compute I, II and III as follows:
\begin{align*}
{\rm{I}}&=\int^T_0\langle\partial_tc^N,\varphi\rangle\dt=\int_0^{\tau}\langle\partial_t\varphi,\varphi\rangle\dt=\frac{1}{2}\|\varphi(\tau)\|^2_2-\frac{1}{2}\|\varphi(0)\|^2_2,\\
{\rm{II}}&=-\int^T_0\int_{\Omega}\uu^Nc^N\cdot\nabla\varphi\dx\dt=\int^T_0\int_{\Omega}\uu^N\cdot\nabla c^N\varphi\dx\dt\\
&=\int^T_0\int_{\Omega}\uu^N\cdot\nabla\varphi\,\varphi\dx\dt=\frac{1}{2}\int^T_0\int_{\Omega}\uu^N\cdot\nabla\,|\varphi|^2\dx\dt=0,\\
{\rm{III}}&=\int^T_0\int_{\Omega}\q^N_c\cdot\nabla\varphi\dx\dt=\int_0^T\int_{\Omega}\boldsymbol{K}(c^N,|\DD\uu^N|)\nabla c^N\cdot\nabla\varphi\dx\dt\\
&=\int_0^T\int_{\Omega}\boldsymbol{K}(c^N,|\DD\uu^N|)\nabla\varphi\cdot\nabla\varphi\dx\dt\geq0.
\end{align*}
Altogether, we have $\|\varphi(\tau)\|_2\leq\|\varphi(0)\|_2=0$, which implies that \begin{equation}\label{maxp}
c^N(x,t)\leq\tilde{c}_0\,\,\,{\rm{a.e.}}\,\, {\rm{in}}\,\,\, Q_T.
\end{equation}

In the case of the minimum principle, we choose $\varphi\defeq\chi_{[0,\tau]}(t)\min\{0,c^N\}$, which obviously belongs to $W^{1,2}_0(\Omega)$ for a.e. $t\in(0,T)$. If we proceed as above, we obtain that
\begin{equation}\label{minp}
0\leq c^N(x,t)\,\,\,{\rm{a.e.}}\,\, {\rm{in}}\,\,\, Q_T.
\end{equation}

 Note that since convex sets are weakly closed and the set $\{x\in\R:x\geq 0\}$ is convex, weak or strong limits of $c^N$ are still non-negative.
\end{subsection}

\begin{subsection}{Uniform estimates independent of $N$}
Now we shall derive some uniform estimates independent of $N$. We first multiply the $j$th equation in \eqref{Gsecond1} by $a^{N}_j$, and then take the sum over $j=1,\ldots,N$. If we integrate the result over $(0,t)$, we can deduce that
\begin{equation}\label{secest1}
\sup_{t\in(0,T)}\|\uu^{N}(t)\|^2_2+\int^T_0\int_{\Omega}\left(|\nabla\uu^{N}|^{p(c^N)}+|\SSS^{N}|^{p'(c^N)}\right)\dx\dt\leq C.
\end{equation}
By Proposition \ref{mainemb}, we have
\begin{equation}\label{secest1-2}
\|\uu^{N}\|_{L^{\frac{p^-(d+2)}{d}}(Q_T)}\leq C,
\end{equation}
and thus
\begin{equation}\label{secest1-3}
\|\uu^{N}\otimes\uu^N\|_{L^{\frac{p^-(d+2)}{2d}}(Q_T)}\leq C.
\end{equation}
Furthermore, by weak and weak-* lower semicontinuity of norms, we have from \eqref{mainest2} that
\begin{equation}\label{secest2}
\|c^N\|_{L^{\infty}(0,T;L^2(\Omega))}+\|c^N\|_{L^2(0,T;W^{1,2}_0(\Omega))}+\|\q^N_c\|_{L^2(0,T;L^2(\Omega))}\leq C.
\end{equation}

Now we shall derive a uniform H\"older estimate for the approximate concentrations $c^N$. Since $c_0\in \{f\in C^{\alpha_0}(\overline{\Omega})\,\,\,{\rm{for}}\,\,\,{\rm{some}}\,\,\,\alpha_0\in(0,1):f=0\,\,\,{\rm{on}}\,\,\,\partial\Omega\}$ and $c^N(\cdot,0)=c_0$ a.e. in $\Omega$ for all $N\in\mathbb{N}$, we have that for some $\alpha_0\in(0,1)$ independent of $N$, $c^N(\cdot,0)\in C^{\alpha_0}(\overline{\Omega})$ after possibly being redefined on a set of measure zero. Therefore we have $c^N_{|_{\Gamma_T}}\in C^{\beta,\beta/2}(\Gamma_T)$ for some $\beta\in(0,1)$ independent of $N$. 

Now, we are ready to apply Theorem \ref{degiorgi}. We first consider the case of $d=2$. In this case, $p^->\frac{d+2}{2}=2$ implies the existence of a positive real number $m$ sufficietly close to $4$ so that $2p^->m>4$ and
\[\frac{2}{m}+\frac{d}{m}=\frac{2}{m}+\frac{2}{m}=1-\varepsilon_1,\,\,\,{\rm{and}}\,\,\,m>\frac{2}{1-\varepsilon_1}\,\,\,{\rm{for}}\,\,\,{\rm{some}}\,\,\,{\rm{small}}\,\,\,\varepsilon_1>0.\]
Also, as we see from \eqref{maxp} and \eqref{minp} that $c^N$ is uniformly bounded, and hence we have from \eqref{secest1-2} that
\[\|\uu^Nc^N\|_{L^{m}(Q_T)}\leq C\|\uu^{N}\|_{{L^{2p^-}}(Q_T)}\leq C.\]
We then apply Theorem \ref{degiorgi} to \eqref{Gsecond2} with $m=r=q$ and $\boldsymbol{g}=\uu^Nc^N$. Therefore we have the following estimate when $d=2$:
\begin{equation}\label{secest4}
\|c^N\|_{C^{\alpha_1,\alpha_1/2}(\overline{Q}_T)}\leq C,
\end{equation}
where $\alpha_1\in(0,\beta]$ and $C>0$ is independent of $N$.

Next, we shall deal with the case of $d\geq3$, which is more complicated. From \eqref{secest1} and Sobolev embedding, we have
\begin{equation}\label{DeGuu}
\|\uu^N\|_{L^{p^-}(0,T;L^{\frac{dp^-}{d-p^-}}(\Omega)^d)}\leq C.
\end{equation}
Note that the assumption $p^->\frac{d+2}{2}$ is equivalent to $\frac{d(d+2)}{d-2}<\frac{dp^-}{d-p^-}$, hence we can choose sufficiently small $\delta>0$ so that $\frac{d(d+2)}{d-2}+\delta<\frac{dp^-}{d-p^-}$. Then we have from \eqref{DeGuu} that
\begin{equation}\label{DeGuu2}
\|\uu^N\|_{L^{p^-}(0,T;L^{\frac{d(d+2)}{d-2}+\delta}(\Omega)^d))}\leq C.
\end{equation}
Furthermore, if we choose $r=p^->2$ and $q=\frac{d(d+2)}{d-2}+\delta>d$, $p^->\frac{d+2}{2}$ implies that
\[\frac{2}{r}+\frac{d}{q}<\frac{2}{p^-}+\frac{d-2}{d+2}<\frac{4}{d+2}+\frac{d-2}{d+2}=1.\]
Now, from \eqref{maxp}, \eqref{minp} and \eqref{DeGuu2}
\begin{equation}\label{DeGuu3}
\|\uu^Nc^N\|_{L^r(0,T;L^q(\Omega)^d)}\leq C.
\end{equation}
We can then apply Theorem \ref{degiorgi} to \eqref{Gsecond2} with aforementioned $r>0$ and $q>0$, and $\boldsymbol{g}=\uu^Nc^N$. As a result, we obtain the following estimate for $d\geq3$:
\begin{equation}\label{DeGuu4}
\|c^N\|_{C^{\alpha_1,\alpha_1/2}(\overline{Q}_T)}\leq C,
\end{equation}
where $\alpha_1\in(0,\beta]$ and $C>0$ is independent of $N$.

Finally, we shall derive a uniform estimate for $\partial_t\uu^N$. For simplicity, we shall denote $\boldsymbol{H}^N\defeq-\SSS^N+\uu^N\otimes\uu^N$ and let $q_0\defeq\min\left\{\frac{p^-(d+2)}{2d},(p^+)'\right\}>1$. Then, from \eqref{secest1} and \eqref{secest1-3}, we obtain
\[\|\boldsymbol{H}^N\|_{L^{q_0}(Q_T)}\leq C.\]
Now let $\boldsymbol{\psi}\in W^{\ell,2}_{0,{\rm{div}}}(\Omega)^d$ and let $P^N_{\ell}$ denote the orthogonal projection into $A_N$ with respect to the $W^{\ell,2}_0(\Omega)$-inner product. Then, from \eqref{Gsecond1}, we have
\begin{align*}
\bigg|\int_{\Omega}\partial_t\uu^N\cdot\boldsymbol{\psi}\dx\bigg|
&=\bigg|\int_{\Omega}\partial_t\uu^N\cdot P^N_{\ell}\boldsymbol{\psi}\dx\bigg|\leq\int_{\Omega}|\boldsymbol{H}^N||\DD P^N_{\ell}\boldsymbol{\psi}|\dx+\int_{\Omega}|\boldsymbol{f}||P^N_{\ell}\boldsymbol{\psi}|\dx\\
&\leq\|\boldsymbol{H}^N\|_{q_0}\|P^N_{\ell}\boldsymbol{\psi}\|_{1,q_0'}+\|\boldsymbol{f}\|_2\|P^N_{\ell}\boldsymbol{\psi}\|_2\leq C\|\boldsymbol{H}^N\|_{q_0}\|P^N_{\ell}\boldsymbol{\psi}\|_{\ell,2}+\|\boldsymbol{f}\|_2\|P^N_{\ell}\boldsymbol{\psi}\|_{\ell,2}\\
&\leq C\left(\|\boldsymbol{H}^N\|_{q_0}+\|\boldsymbol{f}\|_2\right)\|\boldsymbol{\psi}\|_{\ell,2}.
\end{align*}

Therefore we have that
\[\int^T_0\|\partial_t\uu^N\|^{q_0}_{W^{-\ell,2}_{\rm{div}}(\Omega)}\dt\leq C\int^T_0\|\boldsymbol{H}^N\|^{q_0}_{q_0}\dt+\int^T_0\|\boldsymbol{f}\|^{q_0}_2\dt\leq C\left(\|\boldsymbol{H}^N\|^{q_0}_{L^{q_0}(Q_T)}+\|\boldsymbol{f}\|^2_{L^2(Q_T)}\right)\leq C,\]
which means that
\begin{equation}\label{secest5}
\|\partial_t\uu^N\|_{L^{q_0}(0,T;W^{-\ell,2}_{\rm{div}}(\Omega))}\leq C.
\end{equation}
\end{subsection}

\begin{subsection}{The limit $N\rightarrow\infty$}
By using the uniform estimates derived in the previous subsection, we can now pass $N$ to $\infty$. First of all, from \eqref{secest1} we have
\begin{equation}\label{seconv1}
\uu^N\rightharpoonup \uu\,\,\,{\rm{weakly}}\,\,{\rm{in}}\,\,L^{p^-}(0,T;W^{1,p^-}_{0,{\rm{div}}}(\Omega)^d).
\end{equation}
Furthermore, from \eqref{secest1} and \eqref{secest5} together with the Aubin--Lions compactness lemma, we have $\uu^N\rightarrow\uu$ strongly in $L^2(Q_T)^d$. Therefore, using \eqref{secest1-2} with an interpolation inequality yields
\begin{equation}\label{seconv2}
\uu^N\rightarrow\uu\,\,\,{\rm{strongly}}\,\,{\rm{in}}\,\,L^s(Q_T)^d\qquad\forall s\in[1,p^-(d+2)/d).
\end{equation}
From \eqref{secest5}, we also have
\begin{equation}\label{timeconv1}
\partial_t\uu^N\rightharpoonup\partial_t\uu\,\,\,{\rm{weakly}}\,\,{\rm{in}}\,\,L^{q_0}(0,T;W^{-\ell,2}_{\rm{div}}(\Omega)).
\end{equation}
By \eqref{secest1} again, we obtain
\begin{equation}\label{seconv3}
\SSS^N\rightharpoonup\bar{\SSS}\,\,\,{\rm{weakly}}\,\,{\rm{in}}\,\,L^{p^+}(Q_T)^{d\times d},
\end{equation}
for some $\bar{\SSS}\in\R^{d\times d}$. 

Next, we move to the convergence of the sequence of approximate concentrations. First, by \eqref{secest2}, we have
\begin{equation}\label{seconv4}
c^N\rightharpoonup c\,\,\,{\rm{weakly}}\,\,{\rm{in}}\,\,L^2(0,T;W^{1,2}_0(\Omega)).
\end{equation}
Furthermore, from \eqref{secest4} with Proposition \ref{Holdercpt}, we obtain
\begin{equation}\label{seconv5}
c^N\rightarrow c\,\,\,{\rm{strongly}}\,\,{\rm{in}}\,\,C^{\alpha_1,\alpha_1/2}(\overline{Q}_T)
\end{equation}
for some $\alpha_1\in(0,\alpha)$. Using \eqref{secest2} again yields
\begin{equation}\label{seconv6}
\q^N_c\rightharpoonup\bar{\q}_c\,\,\,{\rm{weakly}}\,\,{\rm{in}}\,\,L^2(Q_T)^d
\end{equation}
for some $\bar{\q}_c\in\R^d$.

Now we are ready to pass $N$ to $\infty$ in the second level of our Galerkin approximations \eqref{Gsecond1} and \eqref{Gsecond2}. First, we fix $\vv=g\ww_j$ where $g\in C^1_0([0,T))$. Then, by \eqref{seconv2} and \eqref{seconv3}, we have 
\begin{equation}\label{midGconv}
\int_{Q_T}\bar{\SSS}:\DD\vv\dx\dt
=\int_{Q_T}\left(\boldsymbol{f}\cdot\vv+(\uu\otimes\uu):\DD\vv+\uu\cdot\partial_t\vv\right)\dx\dt+\int_{\Omega}\uu_0(x)\cdot\vv(x,0)\dx,
\end{equation}
where we have used
\[\int_{Q_T}\partial_t\uu^N\cdot\vv\dx\dt=-\int_{\Omega}\uu^N_0\cdot\vv(x,0)\dx\dt-\int_{Q_T}\uu^N\cdot\partial_t\vv\dx\dt\]
and $\uu^N_0\rightarrow\uu_0$ strongly in $L^2(\Omega)^d$. Since the class of test functions which are finite linear combinations of functions that can be factorized in space and time is dense in the corresponding Bochner space, we have
\begin{equation}\label{Gfinal1}
\int_{Q_T}\bar{\SSS}:\DD\vv\dx\dt
=\int_{Q_T}\left(\boldsymbol{f}\cdot\boldsymbol{\psi}+(\uu\otimes\uu):\DD\boldsymbol{\psi}+\uu\cdot\partial_t\boldsymbol{\psi}\right)\dx\dt+\int_{\Omega}\uu_0(x)\cdot\boldsymbol{\psi}(x,0)\dx,
\end{equation}
for all $\boldsymbol{\psi}\in C^{\infty}_{0,{\rm{div}}}(\Omega\times[0,T))^d$.

With the same argument, \eqref{Gsecond2} together with \eqref{seconv2}, \eqref{seconv5} and \eqref{seconv6} implies
\[\int_{Q_T}\bar{\q}_c\cdot\nabla\varphi\dx\dt=\int_{Q_T}\left(c\,\partial_t\varphi+c\uu\cdot\nabla\varphi\right)\dx\dt+\int_{\Omega}c_0(x)\varphi(x,0)\dx\]
for all $\varphi\in C^{\infty}_0(\Omega\times[0,T))$.

Finally, we shall show that the limit function $\uu$ is contained in the desired solution space $W_{p(c)}(Q_T)$. By \eqref{seconv5} and the continuity of $p$,
\[\forall\varepsilon>0,\,\,\,\exists N\in\mathbb{N}\,\,\,{\rm{such}}\,\,{\rm{that}}\,\,\,n\geq\mathbb{N}\,\,\,{\rm{implies}}\,\,\,|p(c^N)-p(c)|<\frac{\varepsilon}{\theta},\]
where $\theta>1$ is sufficiently large so that $p(c)-\frac{\theta+1}{\theta}\varepsilon>1$. Then we have from \eqref{secest1} that
\begin{align*}
C&\geq\int_{Q_T}|\nabla\uu^N|^{p(c^N)}\dx\dt\geq\int_{\{|\nabla\uu^N|\geq1\}}|\nabla\uu^N|^{p(c^N)}\dx\dt\\
&\geq\int_{\{|\nabla\uu^N|\geq1\}}|\nabla\uu^N|^{p(c^N)-p(c)+p(c)-\varepsilon}\dx\dt\geq\int_{\{|\nabla\uu^N|\geq1\}}|\nabla\uu^N|^{p(c)-\frac{\theta+1}{\theta}\varepsilon}\dx\dt.
\end{align*}
Thus, we obtain that
\[\int_{Q_T}|\nabla\uu^N|^{p(c)-\frac{\theta+1}{\theta}\varepsilon}\dx\dt=\int_{\{|\nabla\uu^N|\geq1\}}|\nabla\uu^N|^{p(c)-\frac{\theta+1}{\theta}\varepsilon}\dx\dt+\int_{\{|\nabla\uu^N|<1\}}|\nabla\uu^N|^{p(c)-\frac{\theta+1}{\theta}\varepsilon}\dx\dt\leq C.\]
We can then further extract a (not relabelled) subsequence such that
\[\nabla\uu^N\rightharpoonup\nabla\uu\,\,\,{\rm{weakly}}\,\,\,{\rm{in}}\,\,\,L^{p(c)-\frac{\theta+1}{\theta}\varepsilon}(Q_T)^{d\times d}.\]
Then, weak lower-semicontinuity leads us to
\[\int_{Q_T}|\nabla\uu|^{p(c)-\frac{\theta+1}{\theta}\varepsilon}\dx\dt\leq C,\]
and thus Fatou's Lemma with $\varepsilon\rightarrow0$ yields
\begin{equation}\label{desnabla}
\int_{Q_T}|\nabla\uu|^{p(c)}\dx\dt\leq C.
\end{equation}
Similarly, we can also deduce that
\begin{equation}\label{desS}
\int_{Q_T}|\bar{\SSS}|^{p'(c)}\dx\dt\leq C.
\end{equation}

To apply the monotone operator theory in the next subsection, the limit function $\uu$ should be an admissible test function in \eqref{Gfinal1}. This is where the compactness of the convective term becomes relevant, and thus the value of $p^-$ should be restricted. Let us fix $\vv=g\ww$ where $g\in C^1([0,T])$ and $\ww\in\{\ww_j\}^{\infty}_{j=1}$. Then, from \eqref{Gsecond1}, we have
\[\int_{Q_T}\partial_t\uu^N\cdot\vv\dx\dt=\int_{Q_T}\left((\uu^N\otimes\uu^N):\nabla\vv-\SSS^N:\nabla\vv+\boldsymbol{f}\cdot\vv\right)\dx\dt.\]
From \eqref{timeconv1}, \eqref{seconv2} and \eqref{seconv3}, we have
\begin{equation}\label{8282}
\int^T_0\langle\partial_t\uu,\vv\rangle=\int_{Q_T}\left((\uu\otimes\uu):\nabla\vv-\bar{\SSS}:\nabla\vv+\boldsymbol{f}\cdot\vv\right)\dx\dt.
\end{equation}
Now let $\boldsymbol{\phi}\in W_{p(c)}(Q_T)$. Then, by Lemma \ref{maindensity} and \eqref{8282}, there exists a subset $\{\boldsymbol{\phi}_k\}_{k=1}^{\infty}\subset$ span$\{g\ww:g\in C^1([0,T])\,\,\,{\rm{and}}\,\,\,\ww\in\{\ww_j\}^{\infty}_{j=1}\}$ such that $\|\boldsymbol{\phi}_k-\boldsymbol{\phi}\|_{W_{p(c)}(Q_T)}\rightarrow0$ and
\begin{equation}\label{8282-3}
\int^T_0\langle\partial_t\uu,\boldsymbol{\phi}_k\rangle=\int_{Q_T}\left((\uu\otimes\uu):\nabla\boldsymbol{\phi}_k-\bar{\SSS}:\nabla\boldsymbol{\phi}_k+\boldsymbol{f}\cdot\boldsymbol{\phi}_k\right)\dx\dt.
\end{equation}
Since $p^->\frac{d+2}{2}\geq\frac{3d+2}{d+2}$ for $d\geq2$, we have from \eqref{secest1-2} that
\[\|\uu\otimes\uu\|_{L^{p'(c)}(Q_T)}\leq\|\uu\|^2_{L^{2p'(c)}(Q_T)}\leq C\|\uu\|^2_{L^{2(p^-)'}(Q_T)}\leq C\|\uu\|^2_{L^{\frac{p^-(d+2)}{d}}(Q_T)}\leq C,\]
and hence we can pass the first term on the right-hand of \eqref{8282-3} to the limit. Also, by \eqref{desS}, we may pass $k$ to the limit in the second term and the convergence of the third term is trivial. This means that the left-hand side also has a limit as $k\rightarrow\infty$. Therefore, we obtain that
\begin{equation}\label{Gthird1}
\int^T_0\langle\partial_t\uu,\boldsymbol{\phi}\rangle=\int_{Q_T}\left((\uu\otimes\uu):\nabla\boldsymbol{\phi}-\bar{\SSS}:\nabla\boldsymbol{\phi}+\boldsymbol{f}\cdot\boldsymbol{\phi}\right)\dx\dt\qquad\forall \boldsymbol{\phi}\in W_{p(c)}(Q_T),
\end{equation}
and
\begin{equation}\label{timeest2}
\partial_t\uu\in (W_{p(c)}(Q_T))^*.
\end{equation}

Next, we claim that $\uu\in C_{\rm{w}}([0,T];W^{-1,(p^+)'}(\Omega))$. As we have that $\sup_{t\in(0,T)}\|\uu\|_{W^{-1,(p^+)'}}\leq C\sup_{t\in(0,T)}\|\uu(t)\|_2\leq C$, it is enough to show that $\int_{\Omega}(\uu(t_1)-\uu(t_0))\cdot\boldsymbol{\phi}\dx\rightarrow0$ as $t_1\rightarrow t_0$ for arbitrary $\boldsymbol{\phi}\in W^{1,p^+}(\Omega)^d$. Note that
\begin{align*}
\int_{\Omega}\left(\uu(t_1)-\uu(t_0)\right)\cdot\boldsymbol{\phi}\dx
&=\int^{t_1}_{t_0}\langle\partial_t\uu,\boldsymbol{\phi}\rangle\dt\\
&=\int^{t_1}_{t_0}\int_{\Omega}\left((\uu\otimes\uu):\nabla\boldsymbol{\phi}-\bar{\SSS}:\nabla\boldsymbol{\phi}+\boldsymbol{f}\cdot \boldsymbol{\phi}\right)\dx\dt\\
&\leq\int^{t_1}_{t_0}\left(\|\uu\|^2_{2(p^-)'}+\|\bar{\SSS}\|_{(p^+)'}+\|\boldsymbol{f}\|_2\right)\|\boldsymbol{\phi}\|_{1,p^+}\dt\\
&\leq\left(\|\uu\|^2_{L^{2(p^-)'}(Q_T)}+\|\bar{\SSS}\|_{L^{(p^+)'}(Q_T)}+\|\boldsymbol{f}\|_{L^2(Q_T)}\right)|t_1-t_2|^{\frac{1}{p^+}}\|\boldsymbol{\phi}\|_{1,p^+}.
\end{align*}
Since the last term goes to $0$ as $t_1\rightarrow t_0$, we deduce that $\uu\in C_{\rm{w}}([0,T];W^{-1,(p^+)'}(\Omega))$. From this claim together with the fact $\|\uu\|_{L^{\infty}(0,T;L^2(\Omega)^d)}\leq C$, we further deduce that
\begin{equation}\label{initialweakconti}
\uu\in C_{\rm{w}}([0,T];L^2(\Omega)^d).
\end{equation}
Now, for fixed $t_0\in[0,T]$, we have from \eqref{Gthird1} that
\begin{align*}
\|\uu(t)-\uu(t_0)\|^2_2
&=\|\uu(t)\|^2_2+\|\uu(t_0)\|^2_2-2\int_{\Omega}\uu(t)\cdot\uu(t_0)\dx\\
&=\|\uu(t)\|^2_2-\|\uu(t_0)\|^2_2-2\int_{\Omega}(\uu(t)-\uu(t_0))\cdot\uu(t_0)\dx\\
&=\int^t_{t_0}\int_{\Omega}(\boldsymbol{f}\cdot\uu-\bar{\SSS}:\DD\uu)\dx\dt-2\int_{\Omega}(\uu(t)-\uu(t_0))\cdot\uu(t_0)\dx.
\end{align*}
Since $\int_{\Omega}(\boldsymbol{f}\cdot\uu-\bar{\SSS}:\DD\uu)\dx$ is integrable on $[0,T]$ and $\uu(t_0)\in L^2(\Omega)^d$, we can conclude that
\[\|\uu(t)-\uu(t_0)\|^2_2\rightarrow0\,\,\,{\rm{as}}\,\,\,t\rightarrow t_0,\]
which means that
\begin{equation}\label{initialveloconti}
\uu\in C([0,T];L^2(\Omega)^d).
\end{equation}

We next verify the time regularity for the concentration. From \eqref{Gsecond2}, we have
\[|\langle\partial_t\,c^N,\varphi\rangle|=\bigg|\int_{\Omega}(\uu^Nc^N\cdot\nabla\varphi-\q^N_c\cdot\nabla\varphi)\dx\bigg|\leq(\|\uu^Nc^N\|_2+\|\q_c^N\|_2)\|\varphi\|_{1,2}\]
By using \eqref{maxp} and the fact $\frac{2d}{d+2}<\frac{d+2}{2}\leq p^-$, we obtain
\[\int^T_0\norm{\partial_tc^{N}}^2_{W^{-1,2}(\Omega)}\dt\leq C\int^T_0(\|\uu^{N}c^{N}\|^2_2\dt+\|\q_c^{N}\|^2_2)\dt\leq C\int^T_0\|\uu^{N}\|^2_2\dt+C\leq C\]
Therefore, we finally have
\[\|\partial_tc^N\|_{L^2(0,T;W^{-1,2}(\Omega))}\leq C,\]
which implies
\[\partial_t\,c\in L^2(0,T;W^{-1,2}(\Omega)).\]
We can now conclude that
\begin{equation}\label{initialconcconti}
c\in C([0,T];L^2(\Omega)).
\end{equation}

As a final step, to complete the proof, it remains to show that
\[\bar{\SSS}=\SSS(c,\DD\uu)\qquad{\rm{and}}\qquad\bar{\q}_c=\q_c(c,\nabla c,\DD\uu).\]
These equalities will be verified in the next subsection.
\end{subsection}

\begin{subsection}{Identification of the limits $\bar{\SSS}=\SSS(c,\DD\uu)$ and $\bar{\q}_c=\q_c(c,\nabla c,\DD\uu)$}
In this subsection, we shall identify the limit functions to complete the proof. We first note that from \eqref{Gsecond1}, we have
\begin{align*}
\int_{Q_T}\boldsymbol{f}\cdot\uu^N\dx\dt
&=\int_{Q_T}\left(\partial_t\uu^N\cdot\uu^N-(\uu^N\otimes\uu^N):\DD\uu^N+\SSS^N:\DD\uu^N\right)\dx\dt\\
&=\frac{1}{2}\int_{\Omega}|\uu^N(T)|^2\dx-\frac{1}{2}\int_{\Omega}|\uu^N(0)|^2\dx+\int_{Q_T}\SSS^N:\DD\uu^N\dx\dt,
\end{align*}
where we have used that ${\rm{div}}\,\uu^N=0$. Also, by \eqref{timeest2}, we can test by $\uu$ in \eqref{Gthird1}. Therefore, as ${\rm{div}}\,\uu=0$, we have that
\begin{align*}
\int_{Q_T}\boldsymbol{f}\cdot\uu\dx\dt
&=\int^T_0\langle\partial_t\uu,\uu\rangle\dt-\int_{Q_T}\left((\uu\otimes\uu):\DD\uu-\bar{\SSS}:\DD\uu\right)\dx\dt\\
&=\frac{1}{2}\int_{\Omega}|\uu(T)|^2\dx-\frac{1}{2}\int_{\Omega}|\uu(0)|^2\dx+\int_{Q_T}\bar{\SSS}:\DD\uu\dx\dt,
\end{align*}
Therefore, it is straightforward to see that
\begin{align*}
&\hspace{-15mm}\int_{Q_T}\SSS^N:\DD\uu^N\dx\dt-\int_{Q_T}\bar{\SSS}:\DD\uu\dx\dt\\
&=\int_{Q_T}\boldsymbol{f}\cdot\uu^N\dx\dt-\int_{Q_T}\boldsymbol{f}\cdot\uu\dx\dt\\
&\hspace{5mm}-\frac{1}{2}\int_{\Omega}|\uu^N(T)-\uu(T)|^2\dx-\int_{\Omega}\uu^N(T)\cdot\uu(T)\dx\\
&\hspace{5mm}+\int_{\Omega}|\uu(T)|^2\dx+\frac{1}{2}\int_{\Omega}|P^N\uu_0|^2\dx-\frac{1}{2}\int_{\Omega}|\uu_0|^2\dx.
\end{align*}
It is obvious that $\int_{Q_T}\boldsymbol{f}\cdot\uu^N\dx\dt\rightarrow\int_{Q_T}\boldsymbol{f}\cdot\uu\dx\dt$ and $\|P^N\uu_0-\uu_0\|_2\rightarrow0$. Also, \eqref{initialweakconti} implies that $\uu^N(T)\rightharpoonup\uu(T)$ weakly in $L^2(\Omega)$.
Thus we have
\begin{equation}\label{importconv}
\limsup_{N\rightarrow\infty}\int_{Q_T}\SSS^N:\DD\uu^N\dx\dt\leq\int_{Q_T}\bar{\SSS}:\DD\uu\dx\dt.
\end{equation}

Next, by \eqref{seconv5} and the dominated convergence theorem, we obtain for arbitrary but fixed $\boldsymbol{\phi}\in C^{\infty}([0,T];C^{\infty}_0(\Omega)^d)$ that
\begin{equation}\label{Lqconvforallq}
\SSS(c^N,\DD\boldsymbol{\phi})\rightarrow\SSS(c,\DD\boldsymbol{\phi})\,\,\,{\rm{strongly}}\,\,\,{\rm{in}}\,\,\,L^q(Q_T)^{d\times d}\,\,\,{\rm{for}}\,\,\,{\rm{all}}\,\,\,q\in(1,\infty).
\end{equation}

Then, by the monotonicity assumption \eqref{S2}, we deduce for any $\boldsymbol{\phi}\in C^{\infty}([0,T];C^{\infty}_0(\Omega)^d)$ that
\begin{align*}
0
&\leq\limsup_{N\rightarrow\infty}\int_{Q_T}\left(\SSS(c^N,\DD\uu^N)-\SSS(c^N,\DD\boldsymbol{\phi})\right):(\DD\uu^N-\DD\boldsymbol{\phi})\dx\dt\\
&=\limsup_{N\rightarrow\infty}\int_{Q_T}(\SSS(c^N,\DD\uu^N):\DD\uu^N-\SSS(c^N,\DD\uu^N):\DD\boldsymbol{\phi}\\
&\hspace{5mm}-\SSS(c^N,\DD\boldsymbol{\phi}):\DD\uu^N+\SSS(c^N,\DD\boldsymbol{\phi}):\DD\boldsymbol{\phi})\dx\dt\\
&\leq\int_{Q_T}\left(\bar{\SSS}:\DD\uu-\bar{\SSS}:\DD\boldsymbol{\phi}-\SSS(c,\DD\boldsymbol{\phi}):\DD\uu+\SSS(c,\DD\boldsymbol{\phi}):\DD\boldsymbol{\phi}\right)\dx\dt\\
&=\int_{Q_T}\left(\bar{\SSS}-\SSS(c,\DD\boldsymbol{\phi})\right):(\DD\uu-\DD\boldsymbol{\phi})\dx\dt.
\end{align*}
Now we use Minty's trick. Since $C^{\infty}([0,T];C^{\infty}_0(\Omega)^d)$ is dense in $W_{p(c)}(Q_T)$, we can set $\boldsymbol{\phi}=\uu\pm\lambda\vv$ with $\lambda>0$ and $\vv\in C^{\infty}([0,T];C^{\infty}_0(\Omega)^d)$. By passing to the limit $\lambda\rightarrow0$ and using the continuity of $\SSS$, we deduce the desired identification $\bar{\SSS}=\SSS(c,\DD\uu)$.

Next, we shall show the almost everywhere convergence of $\DD\uu^N$ to $\DD\uu$. Let $\boldsymbol{B}\in\R^{d\times d}$ be a bounded symmetric function. Then with the same procedure as above and the identification $\bar{\SSS}=\SSS(c,\DD\uu)$, we have that
\begin{align*}
0
&\leq\limsup_{N\rightarrow\infty}\int_{Q_T}\left(\SSS(c^N,\DD\uu^N)-\SSS(c^N,\boldsymbol{B})\right):(\DD\uu^N-\boldsymbol{B})\dx\dt\\
&\leq\int_{Q_T}\left(\SSS(c,\DD\uu)-\SSS(c,\boldsymbol{B})\right):(\DD\uu-\boldsymbol{B})\dx\dt.
\end{align*}
We set $\boldsymbol{B}=\DD\uu\chi_{\{|\DD\uu|<\lambda\}}$ for $\lambda>0$ and let $0<k<\lambda$. Then we have
\begin{align*}
&\hspace{-15mm}\limsup_{N\rightarrow\infty}\int_{\{|\DD\uu|<k\}}\left(\SSS(c^N,\DD\uu^N)-\SSS(c^N,\DD\uu)\right):(\DD\uu^N-\DD\uu)\dx\dt\\
&=\limsup_{N\rightarrow\infty}\int_{\{|\DD\uu|<k\}}\left(\SSS(c^N,\DD\uu^N)-\SSS(c^N,\boldsymbol{B})\right):(\DD\uu^N-\boldsymbol{B})\dx\dt\\
&\leq\limsup_{N\rightarrow\infty}\int_{Q_T}\left(\SSS(c^N,\DD\uu^N)-\SSS(c^N,\boldsymbol{B})\right):(\DD\uu^N-\boldsymbol{B})\dx\dt\\
&=\int_{Q_T}\left(\SSS(c,\DD\uu)-\SSS(c,\boldsymbol{B})\right):(\DD\uu-\boldsymbol{B})\dx\dt\\
&=\int_{\{|\DD\uu|>\lambda\}}\left(\SSS(c,\DD\uu)-\SSS(c,\boldsymbol{B})\right):(\DD\uu-\boldsymbol{B})\dx\dt.
\end{align*}
Since the first integral is independent of $\lambda>0$, passing $\lambda$ to $\infty$ gives us that
\[\limsup_{N\rightarrow\infty}\int_{\{|\DD\uu|<k\}}\left(\SSS(c^N,\DD\uu^N)-\SSS(c^N,\DD\uu)\right):(\DD\uu^N-\DD\uu)\dx\dt=0.\]
Since $c^N$ converges to $c$ uniformly and $\SSS$ is strictly monotone, the only way the above can be true is that $\DD\uu^N$ tends to $\DD\uu$ almost everywhere in the set $\{|\DD\uu|< k\}$. If we pass $k$ to $\infty$, we finally have
\begin{equation}\label{Dua.e.conv}
\DD\uu^N\rightarrow\DD\uu\,\,\,{\rm{a.e.}}\,\,{\rm{in}}\,\,\,Q_T.
\end{equation}
Now by \eqref{seconv5}, \eqref{Dua.e.conv} and the dominated convergence theorem, 
\[\boldsymbol{K}(c^N,|\DD\uu^N|)\rightarrow \boldsymbol{K}(c,|\DD\uu|)\,\,\,{\rm{strongly}}\,\,{\rm{in}}\,\,L^q(Q_T)\,\,\,{\rm{for}}\,\,\,{\rm{all}}\,\,\,q\in(1,\infty).\]
Finally, together with \eqref{seconv4}, we obtain that
\[\q^N_c\rightharpoonup\q_c(c,\nabla c,\DD\uu)\,\,\,{\rm{weakly}}\,\,{\rm{in}}\,\,L^2(Q_T)^d.\]
Therefore, by the uniqueness of the weak limit, we have 
\[\bar{\q}_c=\q_c(c,\nabla c,\DD\uu),\]
which completes the proof of Theorem \ref{mainthm}.
\end{subsection}
\end{section}

\begin{section}{Conclusions}
We have established the existence of global weak solutions to the system of PDEs consisting of the generalized Navier--Stokes equations with a power-law type stess-strain relation where the power-law index depends on the concentration, coupled to a convection-diffusion equation. Our main technical tools included generalized monotone operator theory and parabolic De Giorgi--Nash--Moser regularity theory from which we obtained the H\"older continuity of the approximate concentrations. Thus the restriction $p^->\frac{d+2}{2}$ was essential in our analysis since $p^->\frac{d+2}{2}$ was the minimum requirement to ensure the H\"older continuity of the concentration, which then guaranteed the validity of necessary function space theory in variable-exponent spaces.

However, as was mentioned in the introduction, we can apply our method to the electro-rheological fluid flow problem which is of independent interest. In this case, we can prove the existence of weak solutions with lower values of $p^-$. More precisely, we have the following result as a corollary of Theorem \ref{mainthm}.
\begin{corollary}
Let $\Omega\subset\R^d$ with $d\geq2$ be a bounded open Lipschitz domain, and assume that $\boldsymbol{f}\in L^{(p^+)'}(Q_T)^d$ and $\uu_0\in L^2(\Omega)^d$.  If $p$ is a H\"older continuous function with $p^->\frac{3d+2}{d+2}$, then there exists a weak solution $\uu\in C([0,T];L^2(\Omega)^d)\cap L^{p^-}(0,T;W^{1,p^-}_{0,\,{\rm{div}}}(\Omega)^d)\cap W_{p(\cdot)}(Q_T)$ satisfying
\[\int_{Q_T}\SSS(\DD\uu):\boldsymbol{D\psi}\dx\dt
=\int_{Q_T}\left(\boldsymbol{f}\cdot\boldsymbol{\psi}+(\uu\otimes\uu):\boldsymbol{D\psi}+\uu\cdot\partial_t\boldsymbol{\psi}\right)\dx\dt+\int_{\Omega}\uu_0(x)\cdot\boldsymbol{\psi}(x,0)\dx\]
for all $\boldsymbol{\psi}\in C^{\infty}_{0,{\rm{div}}}(\Omega\times [0,T))^d$ where
\[\SSS(\DD\uu)=(C_1+C_2|\DD\uu|^2)^{\frac{p(x,t)-2}{2}}\DD\uu.\]
\end{corollary}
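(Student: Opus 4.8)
\emph{Plan of proof.} The decisive simplification is that here the power-law index $p=p(x,t)$ is a \emph{prescribed} space--time function, not a function of a concentration coupled through~\eqref{eq3}. So I would simply discard the convection--diffusion equation and run a streamlined, single-level version of the proof of Theorem~\ref{mainthm}: a Galerkin scheme in the velocity alone, a~priori energy estimates, extraction of weak limits, and Minty's monotonicity trick. Neither the maximum/minimum principle of Section~4.4 nor the parabolic De Giorgi--Nash--Moser theorem (Theorem~\ref{degiorgi}) is needed, which is exactly why the hypothesis $p^->d$ can be relaxed to the classical Lions--Ladyzhenskaya threshold $p^->\frac{3d+2}{d+2}$. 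As preliminaries I would check that $\SSS(\DD\uu)=(C_1+C_2|\DD\uu|^2)^{\frac{p(x,t)-2}{2}}\DD\uu$ satisfies~\eqref{S1}--\eqref{S3} (with the concentration slot inert) and that a H\"older continuous exponent on $\overline{Q}_T$ is a fortiori log-H\"older continuous in the sense of~\eqref{logh}, so that the variable-exponent theory of Section~3, in particular the density Lemma~\ref{maindensity} for $W_{p(\cdot)}(Q_T)$, applies verbatim.

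Next I would carry out the construction. Using the basis $\{\ww_j\}$ of $W^{\ell,2}_{0,\mathrm{div}}(\Omega)^d$, solve the Galerkin system for $\uu^N=\sum_{i=1}^{N}a_i^N\ww_i$,
\[\int_{\Omega}\left(\partial_t\uu^N\cdot\ww_j-(\uu^N\otimes\uu^N):\DD\ww_j+\SSS(\DD\uu^N):\DD\ww_j\right)\dx=\int_{\Omega}\boldsymbol{f}\cdot\ww_j\dx,\qquad\uu^N(\cdot,0)=P^N\uu_0,\]
with local solvability from Carath\'eodory's theorem. Testing with $\uu^N$, using the coercivity~\eqref{S3}, Poincar\'e's inequality, Proposition~\ref{mainemb} and the integrability of $\boldsymbol{f}$, I would obtain, exactly as in~\eqref{secest1}--\eqref{secest1-3},
\[\sup_{t\in(0,T)}\|\uu^N(t)\|_2^2+\int_{Q_T}\left(|\nabla\uu^N|^{p(\cdot)}+|\SSS(\DD\uu^N)|^{p'(\cdot)}\right)\dx\dt\leq C,\]
hence bounds for $\uu^N$ in $L^{p^-}(0,T;W^{1,p^-}_{0,\mathrm{div}}(\Omega)^d)\cap L^{\infty}(0,T;L^2(\Omega)^d)\cap L^{p^-(d+2)/d}(Q_T)$, for $\uu^N\otimes\uu^N$ in $L^{p^-(d+2)/(2d)}(Q_T)$ and for $\SSS(\DD\uu^N)$ in $L^{(p^+)'}(Q_T)^{d\times d}$; a duality argument as in~\eqref{secest5} bounds $\partial_t\uu^N$ in $L^{q_0}(0,T;W^{-\ell,2}_{\mathrm{div}}(\Omega))$ with $q_0=\min\{p^-(d+2)/(2d),(p^+)'\}>1$. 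Passing to a subsequence, $\uu^N\rightharpoonup\uu$ in $L^{p^-}(0,T;W^{1,p^-}_{0,\mathrm{div}})$, $\SSS(\DD\uu^N)\rightharpoonup\bar{\SSS}$ in $L^{(p^+)'}(Q_T)$, and by Lemma~\ref{aubinlion} $\uu^N\to\uu$ strongly in $L^2(Q_T)^d$, hence by interpolation strongly in $L^s(Q_T)^d$ for every $s<p^-(d+2)/d$. This lets me pass to the limit first for test functions $g\ww_j$ and then, by density of finite sums of space--time-factorised functions, for all $\boldsymbol{\psi}\in C^{\infty}_{0,\mathrm{div}}(\Omega\times[0,T))^d$, giving the weak formulation with $\bar{\SSS}$ in place of $\SSS(\DD\uu)$. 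Weak lower semicontinuity of the Luxemburg norm yields $\uu\in W_{p(\cdot)}(Q_T)$ --- simpler here than the Fatou argument of~\eqref{desnabla}, since the exponent is fixed --- and the bound $\|\uu\otimes\uu\|_{L^{p'(\cdot)}(Q_T)}\leq C\|\uu\|_{L^{2(p^-)'}(Q_T)}^2\leq C\|\uu\|_{L^{p^-(d+2)/d}(Q_T)}^2\leq C$, valid precisely because $p^->\frac{3d+2}{d+2}$ forces $2(p^-)'\leq p^-(d+2)/d$, gives $\partial_t\uu\in(W_{p(\cdot)}(Q_T))^*$ and legitimises $\uu$ itself as a test function; the usual weak-continuity/energy argument then upgrades $\uu$ to $C([0,T];L^2(\Omega)^d)$ with $\uu(\cdot,0)=\uu_0$.

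Finally I would identify $\bar{\SSS}=\SSS(\DD\uu)$. Testing the Galerkin equation with $\uu^N$ and the limit equation with $\uu$ (the convective terms vanishing since $\mathrm{div}\,\uu^N=\mathrm{div}\,\uu=0$), subtracting, and using $\uu^N(T)\rightharpoonup\uu(T)$ weakly in $L^2(\Omega)$ together with $P^N\uu_0\to\uu_0$ in $L^2(\Omega)$, I would obtain $\limsup_{N\to\infty}\int_{Q_T}\SSS(\DD\uu^N):\DD\uu^N\dx\dt\leq\int_{Q_T}\bar{\SSS}:\DD\uu\dx\dt$. Combining this with the strict monotonicity~\eqref{S2} --- where the $N$-independence of $\SSS(\DD\boldsymbol{\phi})$ for fixed $\boldsymbol{\phi}$ now trivially replaces the auxiliary convergence $\SSS(c^N,\DD\boldsymbol{\phi})\to\SSS(c,\DD\boldsymbol{\phi})$ used in Section~4.7 --- and Minty's trick (take $\boldsymbol{\phi}=\uu\pm\lambda\vv$ with $\vv\in C^{\infty}([0,T];C^{\infty}_0(\Omega)^d)$ dense in $W_{p(\cdot)}(Q_T)$ and let $\lambda\to0$) gives $\bar{\SSS}=\SSS(\DD\uu)$, completing the proof.

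\textbf{Main obstacle.} As in Theorem~\ref{mainthm}, the only genuinely delicate point is showing that the limit $\uu$ is an admissible test function in the limiting momentum balance, which rests on $\uu\otimes\uu\in L^{p'(\cdot)}(Q_T)$ and hence on $2(p^-)'\leq p^-(d+2)/d$, i.e.\ on $p^-\geq\frac{3d+2}{d+2}$; everything else is a routine contraction of Section~4. A secondary technicality is to verify that the energy estimate still closes under the weaker hypothesis $\boldsymbol{f}\in L^{(p^+)'}(Q_T)$ rather than $\boldsymbol{f}\in L^2(Q_T)$, which one handles by exploiting the improved parabolic integrability of $\uu^N$ rather than a plain $L^2$--$L^2$ pairing, using that $p^->\tfrac{3d+2}{d+2}\geq 2$ for $d\geq2$.
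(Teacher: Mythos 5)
Your proposal follows exactly the route the paper intends for this corollary: the paper offers no separate proof beyond the observation that, with the concentration equation absent, the De Giorgi--Nash--Moser step is unnecessary and the only remaining constraint is the admissibility of $\uu$ as a test function in the limit equation, which is precisely your condition $2(p^-)'\leq p^-(d+2)/d$, i.e.\ $p^-\geq\frac{3d+2}{d+2}$. The one point you rightly flag as a residual technicality --- pairing $\boldsymbol{f}\in L^{(p^+)'}(Q_T)^d$ with $\uu^N$ in the energy estimate --- is left untreated in the paper as well, so your sketch is as complete as the source.
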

Unlike the model used in this paper, such an electro-rheological fluid flow problem is free from a coupled concentration equation, so there is a possibility to improve the above result. For example, if one could develop parabolic $L^{\infty}$ or Lipschitz truncation techniques for variable-exponent spaces, we could perhaps obtain the compactness of the convective term with lower values of $p^-$, and hence we could prove the existence of weak solutions with a weaker assumption on $p^-$. Therefore, an interesting direction for future research is to generalize the solenoidal parabolic Lipschitz truncation technique introduced in \cite{BDS2013} to the case of variable-exponent spaces, so that we can prove the existence of weak solutions for $p^->\frac{2d}{d+2}$.
\end{section}

\section*{Acknowledgements}
Seungchan Ko's work was supported by the UK Engineering and Physical Sciences Research Council [EP/L015811/1].

\bibliography{new_references}
\bibliographystyle{abbrv}


\end{document}